\numberwithin{equation}{section}    % *must* be *before* cleveref!
\definecolor{dblue}{HTML}{0455BF}
\definecolor{dgreen}{HTML}{02724A}
\definecolor{dgreen2}{HTML}{025951}
\definecolor{dred}{HTML}{D90404}
\definecolor{dviolet}{HTML}{42208C}
\definecolor{labelkey}{HTML}{025951}
\definecolor{refkey}{HTML}{025951}
\setlist{itemsep=-2.0pt}
\g@addto@macro\th@plain{
\thm@headfont{\bfseries\sffamily}
\thm@notefont{}}
\g@addto@macro\th@definition{
\thm@headfont{\bfseries\sffamily}
\thm@notefont{}}
\g@addto@macro\th@remark{
\thm@headfont{\bfseries\sffamily}
\thm@notefont{}}
\theoremstyle{plain}
\newtheorem{theorem}{Theorem}[section]
\newtheorem{proposition}[theorem]{Proposition}
\newtheorem{corollary}[theorem]{Corollary}
\newtheorem{lemma}[theorem]{Lemma}
\theoremstyle{definition}
\newtheorem{example}[theorem]{Example}
\theoremstyle{remark}
\newtheorem{remark}[theorem]{Remark}
\DeclareMathDelimiterSet{\scal}[2]{
\selectdelim[l]<{#1}
\mathpunct{}\selectdelim[p]|
{#2}\selectdelim[r]>}
\newcommand{\Scal}[2]{%
\left\langle #1 \,\middle|\, #2 \right\rangle}
\newcommand{\menge}[2]{\bigl\{{#1}\mid{#2}\bigr\}} 
\DeclareMathDelimiterSet{\Menge}[2]{\selectdelim[l]\{
{#1}\selectdelim[m]|{#2}\selectdelim[r]\}}
\def\upintkern@{\mkern-7mu\mathchoice{\mkern-3.5mu}{}{}{}}
\def\upintdots@{\mathchoice{\mkern-4mu\@cdots\mkern-4mu}%
{{\cdotp}\mkern1.5mu{\cdotp}\mkern1.5mu{\cdotp}}%
{{\cdotp}\mkern1mu{\cdotp}\mkern1mu{\cdotp}}%
{{\cdotp}\mkern1mu{\cdotp}\mkern1mu{\cdotp}}}
\DeclareFontFamily{OMX}{mdbch}{}
\DeclareFontShape{OMX}{mdbch}{m}{n}{ <->s * [0.8]  mdbchr7v }{}
\DeclareFontShape{OMX}{mdbch}{b}{n}{ <->s * [0.8]  mdbchb7v }{}
\DeclareFontShape{OMX}{mdbch}{bx}{n}{<->ssub * mdbch/b/n}{}
\DeclareSymbolFont{uplargesymbols}{OMX}{mdbch}{m}{n}
\DeclareMathSymbol{\upintop}{\mathop}{uplargesymbols}{82}
\DeclareMathSymbol{\upointop}{\mathop}{uplargesymbols}{"48}
\renewcommand{\int}{\DOTSI\upintop\ilimits@}
\renewcommand{\oint}{\DOTSI\upointop\ilimits@}
\newcommand{\qq}{\mathscr{Q}}
\newcommand{\RR}{\mathbb{R}}
\newcommand{\NN}{\mathbb{N}}
\newcommand{\HH}{\mathcal{H}}
\newcommand{\GG}{\mathcal{G}}
\newcommand{\BL}{\ensuremath{\EuScript B}\,}
\newcommand{\pinf}{{+}\infty}
\newcommand{\minf}{{-}\infty}
\newcommand{\zeroun}{\intv[o]{0}{1}}
\newcommand{\RXX}{\intv{\minf}{\pinf}}
\newcommand{\RX}{\intv[l]0{\minf}{\pinf}}
\newcommand{\RPP}{\intv[o]0{0}{\pinf}}
\newcommand{\infconv}{\mathbin{\mbox{\small$\square$}}}
\newcommand{\pushfwd}%
{\ensuremath{\mbox{\Large$\,\triangleright\,$}}}
\newcommand{\Id}{\mathrm{Id}}
\newcommand{\moyo}[2]{\leftindex[I]^{#2}{#1}}
\DeclareMathOperator{\dom}{dom}
\DeclareMathOperator{\rav}{{\mathsf{rav}}}
\DeclareFontFamily{U}{mathb}{}
\DeclareFontShape{U}{mathb}{m}{n}{<-5.5> mathb5 <5.5-6.5> mathb6 
<6.5-7.5> mathb7 <7.5-8.5> mathb8 <8.5-9.5> mathb9 <9.5-11> mathb10
<11-> mathb12}{}
\DeclareSymbolFont{mathb}{U}{mathb}{m}{n}
\DeclareMathSymbol{\blackdiamond}{\mathbin}{mathb}{"0C}
\renewcommand{\leq}{\leqslant}
\renewcommand{\geq}{\geqslant}
\newcommand{\exi}{\exists\,}
\newcommand{\proxc}[1]{\mathbin%
{\ensuremath{\overset{#1}{\diamond}}}}
\newcommand{\proxcc}[1]{\mathbin%
{\ensuremath{\overset{#1}{\blackdiamond}}}}
\newcommand{\Rm}[1]{\overset{\mathord{\diamond}}{\mathsf{M}}_{#1}}
\newcommand{\Rcm}[1]{\overset{\mathord{\blackdiamond}}{\mathsf{M}}_{#1}}
\renewenvironment{abstract}{%
\vspace*{-0.50cm}
\small
\quotation%
\noindent%
{\normalfont\bfseries\sffamily
\nobreak\abstractname\ }%
}{%
\endquotation%
\medskip
}
\renewcommand{\abstractname}{Abstract.}
\newcommand\mscsname{MSC classification.}
\newenvironment{keywords}
{\renewcommand\abstractname{\keywordsname}\begin{abstract}}
{\end{abstract}}
\newenvironment{MSC}
{\renewcommand\abstractname{\mscsname}\begin{abstract}}
{\end{abstract}}
\newcommand{\email}[1]{\href{mailto:#1}{\nolinkurl{#1}}}
\renewcommand*\Affilfont{\normalfont\normalsize}
\newcommand\affilcr{\protect\\ \protect\Affilfont}
\renewcommand\AB@affilsepx{\protect\\[0.5em]}
\author[]{Diego J. Cornejo}
\affil[]{North Carolina State University
\affilcr
Department of Mathematics
\affilcr
Raleigh, NC 27695, USA
\affilcr
\email{djcornej@ncsu.edu}
}
\begin{document}

\title{%
Resolvent Compositions\\for Positive Linear Operators
\thanks{%
This work was supported by the National
Science Foundation under grant CCF-2211123.
}}

\date{~}

\maketitle

\vspace{12mm}

\begin{abstract}
Resolvent compositions were recently introduced as
monotonicity-preserving operations that combine a set-valued
monotone operator and a bounded linear operator. They generalize in
particular the notion of a resolvent average. We analyze the
resolvent compositions when the monotone operator is a positive
linear operator. We establish several new properties, including
L\"{o}wner partial order relations, concavity, and asymptotic
behavior. In addition, we show that the resolvent composition
operations are nonexpansive with respect to the Thompson metric. We
also introduce a new form of geometric interpolation and explore
its connections to resolvent compositions. Finally, we study two
nonlinear equations based on resolvent compositions.
\end{abstract}

\begin{keywords}
parallel composition,
proximal composition,
resolvent average,
resolvent composition,
resolvent mixture,
Thompson metric
\end{keywords}

\begin{MSC}
47A63,
47A64,
47H05,
47H09
\end{MSC}

\newpage

\section{Introduction}
\label{sec:1}

Throughout, $\HH$ is a real Hilbert space with identity operator
$\Id_{\HH}$, scalar product $\scal{\cdot}{\cdot}_{\HH}$, and
associated norm $\|\cdot\|_{\HH}$. In addition, $\GG$ is a real
Hilbert space, the set of bounded linear operators from $\HH$ to
$\GG$ is denoted by $\BL(\HH,\GG)$, and $\BL(\HH)=\BL(\HH,\HH)$.
The adjoint of $L\in\BL(\HH,\GG)$ is denoted by $L^*$. The set
$\EuScript{P}(\HH)$ of positive operators on $\HH$ is the
collection of self-adjoint operators $A\in\BL(\HH)$ such that
$(\forall x\in\HH)$ $\scal{Ax}{x}\geq 0$. The L\"{o}wner partial
ordering between two self-adjoint operators $A$ and $B$ in
$\BL(\HH)$ is defined by 
$A\preccurlyeq B\Leftrightarrow B-A\in\EuScript{P}(\HH)$, and the
set of self-adjoint strongly monotone operators on $\HH$ is
\begin{equation}
\EuScript{S}(\HH)=\menge{A\in\EuScript{P}(\HH)}{
(\exi\alpha\in\RPP)\;\;\alpha\Id_{\HH}\preccurlyeq A}.
\end{equation}

A fundamental operator associated with a monotone operator
$B\colon\GG\to2^\GG$ is its resolvent
\begin{equation}
\label{e:res}
J_B=(\Id_{\GG}+B)^{-1},
\end{equation}
which plays a central role in monotone operator theory and convex
optimization, especially through its use in operator splitting
algorithms \cite{Livre1,Acnu24,Sipr21}. In many applications,
monotone operators arise in combination with linear operators,
which motivates the study of operations that combine the
monotone operator $B$ and a linear operator $L\in\BL(\HH,\GG)$
while preserving monotonicity.
Recently, \cite{Svva23} introduced two monotonicity-preserving
operations called the \emph{resolvent composition} and
the \emph{resolvent cocomposition} of $B$ and $L$, defined
respectively by
\begin{equation}
\label{e:rescomp}
L\proxc{\gamma}B
=L^*\pushfwd\brk1{B+\gamma^{-1}\Id_{\GG}}-\gamma^{-1}\Id_{\HH}
\end{equation}
and 
\begin{equation}
\label{e:rescoco}
L\proxcc{\gamma}B=\brk2{L\proxc{1/\gamma}B^{-1}}^{-1},
\end{equation}
where $L^*\pushfwd B=(L^*\circ B^{-1}\circ L)^{-1}$ is the
\emph{parallel composition} of $B$ by $L^*$ \cite{Livre1}, and 
$\gamma\in\RPP$. 
An attractive property of resolvent compositions is that their
resolvent operators can be computed explicitly 
\cite[Propositions~1.2 and 4.1(v)]{Svva23}, namely,
\begin{equation}
\label{e:resres}
J_{\gamma\brk1{L\proxc{\gamma}B}}
=L^*\circ J_{\gamma B}\circ L
\quad\text{and}\quad
J_{\gamma\brk1{L\proxcc{\gamma}B}}
=\Id_{\HH}-L^*\circ\brk1{\Id_{\GG}-J_{\gamma B}}\circ L.
\end{equation}
This feature, in turn, significantly facilitates the
design and implementation of algorithms for monotone inclusion and
convex optimization problems
\cite{Jota24,Svva23,Acnu24,Eusi24,Svva25}. Special cases can also
be implicitly found in concrete applications such as image recovery
\cite{Eusi24}, neural networks \cite{Hasa20}, inverse problems
\cite{Kami17}, and machine learning \cite{Shen17,Yuyl13}. For
further motivation, let us consider the following examples.

\begin{example}[resolvent mixtures]
\label{ex:comix}
Let $0\neq p\in\NN$ and let $\gamma\in\RPP$. For every
$k\in\{1,\ldots,p\}$, let $\GG_k$ be a real Hilbert space, let
$L_k\in\BL(\HH,\GG_k)$ be such that $0<\|L_k\|\leq 1$, let
$B_k\in\EuScript{S}(\GG_k)$, and let $\alpha_k\in\RPP$.
Suppose that $\sum_{k=1}^p\alpha_k=1$, let
$\GG=\bigoplus_{k=1}^p\GG_k$, and set
\begin{equation}
L\colon\HH\to\GG\colon x\mapsto
\brk1{\sqrt{\alpha_k}L_kx}_{1\leq k\leq p}\quad\text{and}\quad
B\colon\GG\to\GG\colon(y_k)_{1\leq k\leq p}\mapsto
(B_ky_k)_{1\leq k\leq p}.
\end{equation}
Then $L\proxc{\gamma}B=\Rm{\gamma}(L_k,B_k)_{1\leq k\leq p}$ and
$L\proxcc{\gamma}B=\Rcm{\gamma}(L_k,B_k)_{1\leq k\leq p}$ are
called \emph{resolvent mixture} and \emph{resolvent comixture},
respectively, introduced in \cite{Svva23} and subsequently studied
in \cite{Jota24,Svva25}. Further, as shown in
\cite[Proposition~5.13(i)]{Svva25}, 
$\Rcm{\gamma}(L_k,B_k)_{1\leq k\leq p}$ graph converges to 
$\sum_{k=1}^p\alpha_kL_k^*\circ B_k\circ L_k$ as $\gamma\to 0$.
\end{example}

\begin{example}[arithmetic, harmonic, and resolvent average]
\label{ex:rav}
In the context of Example~\ref{ex:comix}, suppose that, for every
$k\in\{1,\ldots,p\}$, $\GG_k=\HH$ and $L_k=\Id_{\HH}$. Then, the
\emph{arithmetic average} and the \emph{harmonic average} are
given, respectively, by
\begin{equation}
\label{e:1}
L^*\circ B\circ L=\sum_{k=1}^p\alpha_kB_k
\quad\text{and}\quad
L^*\pushfwd B=\brk3{\sum_{k=1}^p\alpha_kB_k^{-1}}^{-1}.
\end{equation}
An alternative averaging operation is the \emph{resolvent average},
introduced in \cite{Baus10} and further studied in
\cite{Baus16,Svva23,Wang11}, given by
\begin{equation}
\label{e:rav}
\rav_{\gamma}(B_k)_{1\leq k\leq p}=\brk3{\sum_{k=1}^p\alpha_k
\brk2{B_k+\gamma^{-1}\Id_{\HH}}^{-1}}^{-1}-\gamma^{-1}\Id_{\HH},
\quad\text{where}\quad\gamma\in\RPP.
\end{equation}
The resolvent average is as a special case of the resolvent
mixtures \cite[Example~1.3]{Svva23}, to wit,
\begin{equation}
\Rcm{\gamma}(\Id_k,B_k)_{1\leq k\leq p}
=\Rm{\gamma}(\Id_k,B_k)_{1\leq k\leq p}
=\rav_{\gamma}(B_k)_{1\leq k\leq p}.
\end{equation}
It was observed in \cite{Kum15,Laws14} that, when 
$\EuScript{S}(\HH)$ is endowed with the \emph{Thompson
metric} $d_{T}^{\HH}$ (see \eqref{e:thomp}), the averages
\eqref{e:1} and \eqref{e:rav} are nonexpansive. This property is
important as it ensures stability of the averaging processes and is
particularly useful in the study of nonlinear equations
\cite{Kum15}. Further, in the finite-dimensional setting,
\cite[Corollary~4.6]{Baus10} shows that the resolvent average is
concave, and \cite[Theorem~4.2]{Baus10} establishes, by means of a
pointwise convergence proof, that the resolvent average
interpolates between the arithmetic average ($0<\gamma\to0$) and
the harmonic average ($\gamma\to\pinf$).
\end{example}

\begin{example}[weighted $\mathscr{A}\#\mathscr{H}$--means]
\label{ex:ah}
In the finite-dimensional setting of Example~\ref{ex:rav}, a family
$\brk1{\mathcal{L}_{\gamma}(B_k)_{1\leq k\leq p}}_{\gamma\in\RR}$
of means interpolating between the arithmetic average and the
harmonic average was introduced in \cite{Kim11} (see also
\cite{Kim22}). These means, referred to as \emph{weighted
$\mathscr{A}\#\mathscr{H}$--means}, are closely related to
resolvent averages \cite[Proposition~3.5]{Kim11} through the
ordering
\begin{equation}
\rav_{\gamma}(B_k)_{1\leq k\leq p}\preccurlyeq
\mathcal{L}_{1/\gamma}(B_k)_{1\leq k\leq p}\preccurlyeq
\sum_{k=1}^p\alpha_kB_k,
\end{equation}
and themselves interpolate between the arithmetic
average ($\gamma\to\pinf$) and the harmonic average
($\gamma\to\minf$) \cite[Proposition~3.4]{Kim11}.
\end{example}

The aim of this paper is to investigate the operations
\eqref{e:rescomp} and \eqref{e:rescoco} when
$B\in\EuScript{S}(\GG)$. We establish several new properties,
including L\"{o}wner partial order relations, concavity,
nonexpansiveness, and asymptotic behavior. This specific setting
leads to new results that, in particular, generalize the
corresponding asymptotic properties in \cite{Svva25}, as well as
those of the proximal average established in
\cite{Baus10,Kim11,Kum15}.

The remainder of the paper is organized as follows.
In Section~\ref{sec:2}, we provide our notation and necessary
mathematical background. In Section~\ref{sec:3}, we present
several new properties of $(L\proxcc{\gamma}B)_{\gamma\in\RPP}$ and
$(L\proxc{\gamma}B)_{\gamma\in\RPP}$. In particular, these
operations are concave and
\begin{itemize}
\item
$L\proxcc{\gamma}B\preccurlyeq L^*\circ B\circ L$ \quad and \quad
$L\proxcc{\gamma}B\to L^*\circ B\circ L$\;\;as\;\;$0<\gamma\to 0$,
\item
$L^*\pushfwd B\preccurlyeq L\proxc{\gamma}B$ \quad and \quad
$L\proxc{\gamma}B\to L^*\pushfwd B$\;\;as\;\;$\gamma\to\pinf$.
\end{itemize}
In Section~\ref{sec:4}, we show that the resolvent compositions
are nonexpansive with respect to the Thompson metric, in the sense
that, for every $A\in\EuScript{S}(\GG)$ and
$B\in\EuScript{S}(\GG)$,
\begin{equation}
d_{T}^{\HH}\brk1{L\proxcc{\gamma}A,L\proxcc{\gamma}B}\leq
d_{T}^{\GG}(A,B)
\quad\text{and}\quad
d_{T}^{\HH}\brk1{L\proxc{\gamma}A,L\proxc{\gamma}B}\leq
d_{T}^{\GG}(A,B).
\end{equation}
Finally, in Section~\ref{sec:5}, we introduce the geometric
interpolation $\mathcal{L}_{\gamma}(L,B)$ (see \eqref{e:p110})
between $L^*\pushfwd B$ and $L^*\circ B\circ L$ when $L$ is an
isometry, which generalizes the weighted
$\mathscr{A}\#\mathscr{H}$--means. We establish the partial order
relations
\begin{equation}
L^*\pushfwd B\preccurlyeq\mathcal{L}_{-\gamma}(L,B)\preccurlyeq
L\proxcc{\gamma}B\preccurlyeq\mathcal{L}_{1/\gamma}(L,B)
\preccurlyeq L^*\circ B\circ L,
\end{equation}
and conclude by studying two nonlinear equations involving
resolvent compositions.

\section{Notation and background}
\label{sec:2}

The space $\BL(\HH,\GG)$ is endowed with the topology induced by 
the operator norm
\begin{equation}
\label{e:opnorm}
\brk1{\forall L\in\BL(\HH,\GG)}\quad
\|L\|=\sup_{\substack{x\in\HH\\ \|x\|_{\HH}\leq 1}}\|Lx\|_{\GG}.
\end{equation}
Let $L\in\BL(\HH,\GG)$. Then $L$ is an isometry if 
$L^*\circ L=\Id_{\HH}$. Further, $L$ is bounded below if there
exists $\alpha\in\RPP$ such that $(\forall x\in\HH)$
$\alpha\|x\|_{\HH}\leq\|Lx\|_{\GG}$. Equivalently, by
\cite[Fact~2.26]{Livre1}, $L$ is bounded below if and only if
$L$ is injective with closed range. In particular, when $\HH$ and
$\GG$ are finite-dimensional, $L$ is bounded below if and only if
$\ker L=\{0\}$.

The quadratic kernel of $A\in\EuScript{P}(\HH)$ is
$\qq_A\colon\HH\to\RR\colon x\mapsto(1/2)\scal{x}{Ax}_{\HH}$. 
The Legendre conjugate of $f\colon\HH\to\RXX$ is the function 
\begin{equation}
f^*\colon\HH\to\RXX\colon x^*\mapsto
\sup_{x\in\HH}\brk1{\scal{x}{x^*}_{\HH}-f(x)},
\end{equation}
and the Moreau envelope of $f\colon\HH\to\RXX$ of parameter
$\gamma\in\RPP$ is
\begin{equation}
\moyo{f}{\gamma}\colon\HH\to\RXX\colon x\mapsto
\inf_{z\in\HH}\brk2{f(z)+\dfrac{1}{2\gamma}\|x-z\|_{\HH}^2}.
\end{equation}
The set of proper lower semicontinuous convex functions from $\HH$
to $\RX$ is denoted by $\Gamma_0(\HH)$. Let $L\in\BL(\HH,\GG)$ and
$h\colon\GG\to\RXX$. The infimal postcomposition of $h$ by $L^*$ is
\begin{equation}
L^*\pushfwd h\colon\HH\to\RXX\colon x\mapsto
\inf_{\substack{y\in\GG\\L^*y=x}}h(y),
\end{equation} 
the proximal composition of $h$ and $L$ with parameter
$\gamma\in\RPP$ (see \cite{Svva23,Eect25}) is
\begin{equation}
L\proxc{\gamma}h
=\brk2{\moyo{\brk1{h^*}}{\frac{1}{\gamma}}\circ L}^*
-\frac{1}{2\gamma}\|\cdot\|_{\HH}^2,
\end{equation}
and the proximal cocomposition of $h$ and $L$ with parameter
$\gamma\in\RPP$ is 
\begin{equation}
\label{e:pcc}
L\proxcc{\gamma}h
=\brk1{L\proxc{1/\gamma}h^*}^*.
\end{equation}

The following facts will be used subsequently.

\begin{lemma}
\label{l:1}
The following properties are satisfied:
\begin{enumerate}
\item
\label{l:1i}
Let $A\in\EuScript{S}(\GG)$. Then
$\qq_A^*=\qq_{A^{-1}}$.
\item
\label{l:1ii}
Let $A\in\EuScript{S}(\GG)$ and
$B\in\EuScript{S}(\GG)$. Then
$A\preccurlyeq B\Leftrightarrow B^{-1}\preccurlyeq A^{-1}$.
\item
\label{l:1iii}
Let $L\in\BL(\HH,\GG)$, $A\in\EuScript{P}(\GG)$, and
$B\in\EuScript{P}(\GG)$. Then
$A\preccurlyeq B\Rightarrow L^*\circ A\circ L\preccurlyeq 
L^*\circ B\circ L$.
\item
\label{l:1iv}
Let $A\in\EuScript{P}(\GG)$ and 
$B\in\EuScript{P}(\GG)$. Then
$A\preccurlyeq B\Rightarrow\|A\|\leq\|B\|$.
\item
\label{l:1v}
Let $(A_n)_{n\in\NN}$, $(B_n)_{n\in\NN}$, $A$, and $B$ be
self-adjoint operators in $\BL(\GG)$ such that $A_n\to A$,
$B_n\to B$, and $(\forall n\in\NN)$ $A_n\preccurlyeq B_n$. Then
$A\preccurlyeq B$.
\end{enumerate}
\end{lemma}
\begin{proof}
\ref{l:1i}--\ref{l:1ii}: See the proof of
\cite[Example~13.18(i)]{Livre1}.

\ref{l:1iii}: Let $x\in\HH$. Since $A\preccurlyeq B$, 
$\scal{x}{L^*(A(Lx))}=\scal{Lx}{A(Lx)}\leq\scal{Lx}{B(Lx)}
=\scal{x}{L^*(B(Lx))}$.

\ref{l:1iv}: Since $A$ and $B$ are self-adjoint and 
$0\preccurlyeq A\preccurlyeq B$, we deduce from
\cite[Fact~2.25(iii)]{Livre1} that
\begin{equation}
\|A\|=\sup_{\substack{x\in\GG\\\|x\|_{\GG}\leq 1}}\lvert
\scal{Ax}{x}_{\GG}\rvert
=\sup_{\substack{x\in\GG\\\|x\|_{\GG}\leq 1}}\scal{Ax}{x}_{\GG}\leq
\sup_{\substack{x\in\GG\\\|x\|_{\GG}\leq 1}}\scal{Bx}{x}_{\GG}
=\sup_{\substack{x\in\GG\\\|x\|_{\GG}\leq 1}}
\lvert\scal{Bx}{x}_{\GG}\rvert=\|B\|.
\end{equation}

\ref{l:1v}: Since $A_n\to A$ and $B_n\to B$, convergence is in
particular pointwise. Thus, for every $x\in\HH$,
$0\leq\scal{x}{(B_n-A_n)x}\to\scal{x}{(B-A)x}$. Hence,
$0\preccurlyeq B-A$ or, equivalently, $A\preccurlyeq B$.
\end{proof}

\begin{lemma}
\label{l:2}
Suppose that $L\in\BL(\HH,\GG)$ satisfies $0<\|L\|\leq 1$, let
$g\in\Gamma_0(\GG)$, and let $\gamma\in\RPP$. Then the following
hold:
\begin{enumerate}
\item
\label{l:2i}
$L\proxc{\gamma}g=(L\proxcc{1/\gamma}g^*)^*$.
\item
\label{l:2ii}
$L\proxcc{\gamma}g\leq\min\{\,L\proxc{\gamma}g\,,\,g\circ L\,\}$.
\item
\label{l:2iii}
Set $\Phi=(1/2)\|\cdot\|_{\GG}^2-(1/2)\|\cdot\|_{\HH}^2\circ L^*$.
Then $L\proxcc{\gamma}g=(g^*+\gamma\Phi)^*\circ L$.
\item
\label{l:2iv}
Set $\Phi=(1/2)\|\cdot\|_{\GG}^2-(1/2)\|\cdot\|_{\HH}^2\circ L^*$.
Then $L\proxc{\gamma}g=L^*\pushfwd(g+\Phi/\gamma)$.
\end{enumerate}
\end{lemma}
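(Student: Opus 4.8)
The plan is to make \ref{l:2iv} the main identity and to read off \ref{l:2i}, \ref{l:2iii}, and \ref{l:2ii} from it, establishing the four items in the order \ref{l:2i}, \ref{l:2iv}, \ref{l:2iii}, \ref{l:2ii}. For \ref{l:2i}: specializing \eqref{e:pcc} to the parameter $1/\gamma$ and the function $g^*$, and using $g^{**}=g$ (which holds since $g\in\Gamma_0(\GG)$), gives $L\proxcc{1/\gamma}g^*=(L\proxc{\gamma}g)^*$; conjugating both sides, the assertion reduces to $L\proxc{\gamma}g\in\Gamma_0(\HH)$. This is precisely where the hypothesis $0<\|L\|\leq1$ is needed: in the defining formula $L\proxc{\gamma}g=(\moyo{(g^*)}{1/\gamma}\circ L)^*-(1/(2\gamma))\|\cdot\|_{\HH}^2$, the Moreau envelope $\moyo{(g^*)}{1/\gamma}$ is $\gamma$-smooth, hence $\moyo{(g^*)}{1/\gamma}\circ L$ is $(\gamma\|L\|^2)$-smooth and therefore $\gamma$-smooth, so its conjugate is $(1/\gamma)$-strongly convex, and subtracting $(1/(2\gamma))\|\cdot\|_{\HH}^2$ leaves a proper lower semicontinuous convex function. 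This membership, together with much of the lemma, can alternatively be quoted from \cite{Svva23,Eect25}.

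For \ref{l:2iv}: using $g^{**}=g$ again, the standard identity $\moyo{(g^*)}{1/\gamma}=(g^{**}+(1/(2\gamma))\|\cdot\|_{\GG}^2)^*=(g+(1/(2\gamma))\|\cdot\|_{\GG}^2)^*$ shows that, with $h=g+(1/(2\gamma))\|\cdot\|_{\GG}^2\in\Gamma_0(\GG)$, one has $\moyo{(g^*)}{1/\gamma}\circ L=h^*\circ L$. The elementary duality $(L^*\pushfwd k)^*=k^*\circ L$, valid for every proper $k$ and obtained by exchanging the order of the suprema in the definition of the conjugate, then gives $(h^*\circ L)^*=(L^*\pushfwd h)^{**}$. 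The delicate point is that $L^*\pushfwd h\in\Gamma_0(\HH)$, so that the biconjugation is inert: since $g$ has an affine minorant, $h$ is coercive and bounded below, so its sublevel sets are bounded, closed and convex, hence weakly compact; as $L^*$ is weak-to-weak continuous, it carries these sets to weakly compact, hence closed, convex sets, and they are exactly the sublevel sets of $L^*\pushfwd h$ (the infimum in its definition being attained whenever finite). Therefore $(\moyo{(g^*)}{1/\gamma}\circ L)^*=L^*\pushfwd(g+(1/(2\gamma))\|\cdot\|_{\GG}^2)$, and since $\|L^*y\|_{\HH}$ is constant on each affine slice $\{y\in\GG\mid L^*y=x\}$, the term $-(1/(2\gamma))\|\cdot\|_{\HH}^2$ can be absorbed inside the infimal postcomposition, yielding $L\proxc{\gamma}g=L^*\pushfwd(g+\Phi/\gamma)$.

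Item \ref{l:2iii} then follows quickly: applying \ref{l:2iv} to $g^*\in\Gamma_0(\GG)$ with the parameter $1/\gamma$ gives $L\proxc{1/\gamma}g^*=L^*\pushfwd(g^*+\gamma\Phi)$, whence \eqref{e:pcc} and the duality $(L^*\pushfwd k)^*=k^*\circ L$ produce $L\proxcc{\gamma}g=(g^*+\gamma\Phi)^*\circ L$. For \ref{l:2ii}: since $\|L\|\leq1$ forces $\Id_{\GG}-LL^*\in\mathscr{P}(\GG)$, we have $\Phi=\qq_{\Id_{\GG}-LL^*}\geq0$, so $g^*+\gamma\Phi\geq g^*$ and hence $(g^*+\gamma\Phi)^*\leq g^{**}=g$ because conjugation is order-reversing; composing with $L$ and invoking \ref{l:2iii} gives $L\proxcc{\gamma}g\leq g\circ L$. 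For the bound $L\proxcc{\gamma}g\leq L\proxc{\gamma}g$, fix $x\in\HH$ and $y\in\GG$ with $L^*y=x$, set $M=(\Id_{\GG}-LL^*)^{1/2}$, and note that for every $v\in\GG$
\begin{equation*}
\scal{Lx}{v}-g^*(v)-\gamma\Phi(v)=\brk1{\scal{y}{v}-g^*(v)}-\scal{My}{Mv}-(\gamma/2)\|Mv\|_{\GG}^2;
\end{equation*}
the Fenchel--Young inequality bounds the first bracket by $g(y)$, and Cauchy--Schwarz together with $\|My\|\,\|Mv\|-(\gamma/2)\|Mv\|^2\leq(1/(2\gamma))\|My\|^2$ bounds the remaining two terms by $\Phi(y)/\gamma$. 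Taking the supremum over $v$ and then the infimum over admissible $y$, and using \ref{l:2iii} and \ref{l:2iv}, gives $L\proxcc{\gamma}g\leq L\proxc{\gamma}g$.

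I expect the main obstacles to be the $\Gamma_0(\HH)$-membership required in \ref{l:2i} (the single place where $0<\|L\|\leq1$ is genuinely used, entering through the duality between $\gamma$-smoothness and $(1/\gamma)$-strong convexity) and the verification that the biconjugate closure in \ref{l:2iv} may be omitted; the $\gamma$-weighted arithmetic--geometric-mean estimate in \ref{l:2ii} is the only step that is not purely mechanical, and everything else reduces to bookkeeping with Legendre conjugates and infimal postcompositions.
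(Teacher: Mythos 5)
Your proof is correct, but it takes a different route from the paper: the paper disposes of all four items by citation (after recalling $g=g^{**}$, it quotes \cite[Proposition~3.7(iii)]{Eect25} for (i), \cite[Proposition~3.20(ii)--(iii)]{Eect25} for (ii), and \cite[Proposition~3.2(i)--(ii)]{Eect25} for (iii)--(iv)), whereas you re-derive everything from the definitions. Your argument checks out: the reduction of (i) to $L\proxc{\gamma}g\in\Gamma_0(\HH)$ via the smoothness/strong-convexity duality (where $\|L\|\leq 1$ keeps the Lipschitz constant of $\nabla(\moyo{(g^*)}{1/\gamma}\circ L)$ at most $\gamma$) is sound; the identity $(L^*\pushfwd k)^*=k^*\circ L$ and the inertness of the biconjugate in (iv) — justified through supercoercivity of $g+\tfrac{1}{2\gamma}\|\cdot\|_{\GG}^2$, attainment on the fibers of $L^*$, and weak compactness of the images of sublevel sets — give exactly \cite[Proposition~3.2]{Eect25}; (iii) then follows by conjugating (iv) applied to $(g^*,1/\gamma)$; and in (ii) both the order-reversal argument (using $\Phi=\qq_{\Id_{\GG}-L\circ L^*}\geq 0$) and the Fenchel--Young/Cauchy--Schwarz estimate with $M=(\Id_{\GG}-L\circ L^*)^{1/2}$ are valid pointwise bounds that, after taking suprema in $v$ and infima over the fiber, yield $L\proxcc{\gamma}g\leq\min\{L\proxc{\gamma}g,g\circ L\}$. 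What your approach buys is a self-contained proof that makes explicit the two places where $0<\|L\|\leq 1$ is genuinely used; what the paper's approach buys is brevity, since these facts are already established in the companion reference.
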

\begin{proof} 
Recall that $g=g^{**}$ \cite[Corollary~13.38]{Livre1}.

\ref{l:2i}: \cite[Proposition~3.7(iii)]{Eect25}.

\ref{l:2ii}: \cite[Proposition~3.20(ii)--(iii)]{Eect25}. 

\ref{l:2iii}--\ref{l:2iv}: \cite[Proposition~3.2(i)--(ii)]{Eect25}.
\end{proof}

\begin{lemma}
\label{l:3}
Let $L\in\BL(\HH,\GG)$ and $B\in\EuScript{P}(\GG)$.
Then the following hold:
\begin{enumerate}
\item
\label{l:3i}
$L^*\circ B\circ L\in\EuScript{P}(\HH)$.
\item
\label{l:3ii}
$\qq_B\circ L=\qq_{L^*\circ B\circ L}$.
\item
\label{l:3iii}
Suppose that $B\in\EuScript{S}(\GG)$ and that $L$ is bounded below.
Then $L^*\circ B\circ L\in\EuScript{S}(\HH)$ and
$L^*\pushfwd B\in\EuScript{S}(\HH)$.
\end{enumerate}
\end{lemma}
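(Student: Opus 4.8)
The plan is to treat the three items in turn, each reducing to a short computation with quadratic forms.

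For \ref{l:3i}, I would first record that $L^*\circ B\circ L$ is self-adjoint, since $\brk1{L^*\circ B\circ L}^*=L^*\circ B^*\circ L^{**}=L^*\circ B\circ L$ because $B$ is self-adjoint. Positivity is then immediate: for every $x\in\HH$, the defining property of the adjoint gives
\[
\scal{x}{\brk1{L^*\circ B\circ L}x}_{\HH}=\scal{Lx}{B(Lx)}_{\GG}\geq 0,
\]
as $B\in\mathscr{P}(\GG)$. For \ref{l:3ii}, one expands directly: for every $x\in\HH$,
\[
(\qq_B\circ L)(x)=(1/2)\scal{Lx}{B(Lx)}_{\GG}
=(1/2)\scal{x}{\brk1{L^*\circ B\circ L}x}_{\HH}
=\qq_{L^*\circ B\circ L}(x),
\]
where the middle step is again the adjoint identity and where $\qq_{L^*\circ B\circ L}$ is well defined thanks to \ref{l:3i}.

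For \ref{l:3iii}, let $\alpha\in\RPP$ witness that $L$ is bounded below and let $\beta\in\RPP$ satisfy $\beta\Id_{\GG}\preccurlyeq B$. Then, for every $x\in\HH$,
\[
\scal{x}{\brk1{L^*\circ B\circ L}x}_{\HH}=\scal{Lx}{B(Lx)}_{\GG}
\geq\beta\|Lx\|_{\GG}^{2}\geq\alpha^{2}\beta\|x\|_{\HH}^{2},
\]
so $\alpha^{2}\beta\Id_{\HH}\preccurlyeq L^*\circ B\circ L$, which together with \ref{l:3i} gives $L^*\circ B\circ L\in\mathscr{S}(\HH)$. For the term $L^*\pushfwd B$, I would first note that $B^{-1}\in\mathscr{S}(\GG)$ (the inverse of a strictly positive operator being strictly positive, which also follows from $B\preccurlyeq\|B\|\Id_{\GG}$ together with Lemma~\ref{l:1}\ref{l:1ii}). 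Applying to $B^{-1}$ the portion of \ref{l:3iii} just established yields $L^*\circ B^{-1}\circ L\in\mathscr{S}(\HH)$, hence its inverse is again strictly positive; by \eqref{e:parall} that inverse is precisely $L^*\pushfwd B$, and therefore $L^*\pushfwd B\in\mathscr{S}(\HH)$.

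All the manipulations above are routine, so the only point deserving care is the implicit fact that a strictly positive operator is boundedly invertible --- needed to give meaning to $B^{-1}$ and to $L^*\pushfwd B$, and already built into the definition \eqref{e:parall} of the parallel composition. This is the standard statement that a self-adjoint operator bounded below is a bijection with bounded inverse (Lax--Milgram, or a direct closed-range argument); I would either invoke it or supply the one-line justification, and I foresee no further obstacle.
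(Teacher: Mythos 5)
Your proof is correct and follows essentially the same route as the paper: direct quadratic-form computations for \ref{l:3i}--\ref{l:3ii} (the paper instead cites Lemma~\ref{l:1}\ref{l:1iii} with $A=0$, whose proof is exactly your computation), and for \ref{l:3iii} the same lower bound $\alpha^{2}\beta\,\Id_{\HH}\preccurlyeq L^*\circ B\circ L$ followed by applying the result to $B^{-1}$ and inverting via \eqref{e:parall}. Your extra remarks on self-adjointness and on the bounded invertibility of strictly positive operators simply make explicit points the paper leaves implicit.
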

\begin{proof}
\ref{l:3i}: Take $A=0$ in Lemma~\ref{l:1}\ref{l:1iii}.

\ref{l:3ii}: For every $x\in\HH$, $\qq_B(Lx)=(1/2)\scal{Lx}{B(Lx)}
=(1/2)\scal{x}{L^*(B(Lx))}=\qq_{L^*\circ B\circ L}(x)$. 

\ref{l:3iii}: Since $B\in\EuScript{S}(\GG)$, there exists
$\alpha\in\RPP$ such that $\alpha\Id_{\GG}\preccurlyeq B$. On the
other hand, since $L$ is bounded below, there exists $\beta\in\RPP$
such that $\beta^2\Id_{\HH}\preccurlyeq L^*\circ L$. Therefore, 
Lemma~\ref{l:1}\ref{l:1iii} yields
\begin{equation}
(\alpha\beta^2)\Id_{\HH}\preccurlyeq\alpha(L^*\circ L)
=L^*\circ(\alpha\Id_{\GG})\circ L\preccurlyeq L^*\circ B\circ L,
\end{equation}
i.e., $L^*\circ B\circ L\in\EuScript{S}(\HH)$. Similarly, 
$L^*\circ B^{-1}\circ L\in\EuScript{S}(\HH)$, which implies that
$L^*\pushfwd B=(L^*\circ B^{-1}\circ L)^{-1}\in\EuScript{S}(\HH)$.
\end{proof}

\begin{lemma}[\protect{\cite[Proposition~3.3(ii)]{Svva25}}]
\label{l:4}
Let $L\in\BL(\HH,\GG)$, let $B\colon\GG\to2^\GG$, let 
$\gamma\in\RPP$, and set $\Psi=\Id_{\GG}-L\circ L^*$. Then
$L\proxcc{\gamma}B=L^*\circ(B^{-1}+\gamma\Psi)^{-1}\circ L$.
\end{lemma}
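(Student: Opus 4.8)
The plan is to unfold the definitions \eqref{e:rescomp}, \eqref{e:rescoco}, and \eqref{e:parall} in succession and to reduce the assertion to a single graph-theoretic identity of Schur-complement/Woodbury type that holds for an \emph{arbitrary} set-valued operator, with no assumption on $\|L\|$.

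First I would apply \eqref{e:rescoco} with parameter $\gamma$ and then \eqref{e:rescomp} with parameter $1/\gamma$ to $B^{-1}$, which yields
\begin{equation}
L\proxcc{\gamma}B=\bigl(L\proxc{1/\gamma}B^{-1}\bigr)^{-1}
=\Bigl(L^*\pushfwd\bigl(B^{-1}+\gamma\Id_{\GG}\bigr)-\gamma\Id_{\HH}\Bigr)^{-1}.
\end{equation}
Abbreviating $M=B^{-1}+\gamma\Id_{\GG}\colon\GG\to 2^{\GG}$ and invoking \eqref{e:parall}, this becomes $L\proxcc{\gamma}B=\bigl((L^*\circ M^{-1}\circ L)^{-1}-\gamma\Id_{\HH}\bigr)^{-1}$. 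Since $M-\gamma L\circ L^*=B^{-1}+\gamma\Id_{\GG}-\gamma L\circ L^*=B^{-1}+\gamma\Psi$, proving the lemma amounts to establishing
\begin{equation}
\label{e:woodbury-plan}
\bigl((L^*\circ M^{-1}\circ L)^{-1}-\gamma\Id_{\HH}\bigr)^{-1}
=L^*\circ(M-\gamma L\circ L^*)^{-1}\circ L.
\end{equation}

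To prove \eqref{e:woodbury-plan} I would show that the two sides have the same graph. A pair $(x,w)\in\HH\times\HH$ belongs to the graph of the left-hand side if and only if $x+\gamma w\in(L^*\circ M^{-1}\circ L)^{-1}(w)$, i.e. $w\in L^*\bigl(M^{-1}(L(x+\gamma w))\bigr)$, i.e. there exists $y\in\GG$ with $w=L^*y$ and $L(x+\gamma w)\in My$. On the other hand, $(x,w)$ belongs to the graph of the right-hand side if and only if there exists $v\in\GG$ with $w=L^*v$ and $Lx+\gamma L(L^*v)\in Mv$; substituting $L^*v=w$ in this membership relation replaces $Lx+\gamma L(L^*v)$ by $L(x+\gamma w)$, so the two descriptions coincide. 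This proves \eqref{e:woodbury-plan}, and hence the lemma.

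I do not anticipate a genuine obstacle: set-valued inversion is simply graph reversal and is always well defined, while parallel composition, operator composition, and addition of a linear operator are all graph operations, so the argument is essentially bookkeeping. The two points requiring attention are keeping track of the parameter inversion introduced by \eqref{e:rescoco}--\eqref{e:rescomp} (which is why the coefficient of $L\circ L^*$ ends up being $\gamma$ rather than $1/\gamma$), and the substitution $L^*v=w$ that makes the two graphs in \eqref{e:woodbury-plan} agree and is the one place where the argument is not purely formal. For the special case $B=\partial g$ with $g\in\Gamma_0(\GG)$ and $0<\|L\|\leq 1$ one could alternatively derive the result from Lemma~\ref{l:2}\ref{l:2iv} together with \eqref{e:pcc}, but the graph computation has the advantage of covering every set-valued $B$ and every $L$ at once.
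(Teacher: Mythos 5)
Your proof is correct. Note that the paper itself offers no argument for this lemma: it is quoted directly from \cite[Proposition~3.3(ii)]{Svva25}, so what you provide is a self-contained verification rather than a variant of an internal proof. Your unfolding of \eqref{e:rescoco}, \eqref{e:rescomp}, and \eqref{e:parall} correctly gives $L\proxcc{\gamma}B=\bigl((L^*\circ M^{-1}\circ L)^{-1}-\gamma\Id_{\HH}\bigr)^{-1}$ with $M=B^{-1}+\gamma\Id_{\GG}$, and the graph computation of your Woodbury-type identity is sound: on both sides the pair $(x,w)$ is characterized by the existence of $y\in\GG$ with $w=L^*y$ and $L(x+\gamma w)\in My$, the only nonformal step being the substitution $L^*v=w$ inside the membership relation, which is legitimate because the inclusion $Lx+\gamma L(L^*v)\in Mv$ depends on $v$ only through $L^*v$ and $Mv$. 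Since everything reduces to graph reversal and set translation, the identity indeed holds for arbitrary set-valued $B$ and arbitrary $L\in\BL(\HH,\GG)$, with neither $0<\|L\|\leq 1$ nor monotonicity playing any role, which matches the generality of the statement as cited; your closing remark that the convex-analytic route via Lemma~\ref{l:2}\ref{l:2iv} and \eqref{e:pcc} would only cover subdifferentials under a norm restriction is also accurate.
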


\begin{lemma}[\protect{\cite[Proposition~3.4(i)]{Svva25}}]
\label{l:5}
Suppose that $L\in\BL(\HH,\GG)$ is an isometry, let
$B\colon\GG\to2^\GG$, and let $\gamma\in\RPP$. Then 
$L\proxc{\gamma}B=L\proxcc{\gamma}B$.
\end{lemma}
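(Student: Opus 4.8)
The plan is to argue directly at the level of graphs. Since $B$ is an arbitrary set-valued operator --- no monotonicity, single-valuedness, or full domain is assumed --- the usual functional-analytic identities for resolvents are not available here, and one simply unwinds the definitions \eqref{e:rescomp}, \eqref{e:rescoco}, and \eqref{e:parall}.

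First I would compute $\gra(L\proxc{\gamma}B)$. Combining \eqref{e:rescomp} with \eqref{e:parall} gives $L\proxc{\gamma}B=\bigl(L^*\circ(B+\gamma^{-1}\Id_\GG)^{-1}\circ L\bigr)^{-1}-\gamma^{-1}\Id_\HH$, and unwinding the inverses and compositions shows that $(x,u)\in\gra(L\proxc{\gamma}B)$ if and only if there exists $w\in\GG$ with $x=L^*w$ and $L(u+\gamma^{-1}x)-\gamma^{-1}w\in Bw$. The linear change of variables $p=w$, $q=L(u+\gamma^{-1}x)-\gamma^{-1}w$ turns this into the more symmetric statement
\[
(x,u)\in\gra(L\proxc{\gamma}B)\quad\Longleftrightarrow\quad(\exists\,(p,q)\in\gra B)\ \ p+\gamma q=L(x+\gamma u)\ \text{ and }\ L^*p=x .
\]
Applying this identity verbatim with $B$ replaced by $B^{-1}$ and $\gamma$ by $1/\gamma$, and then passing to the inverse graph as dictated by \eqref{e:rescoco}, one obtains in exactly the same way
\[
(x,u)\in\gra(L\proxcc{\gamma}B)\quad\Longleftrightarrow\quad(\exists\,(p,q)\in\gra B)\ \ p+\gamma q=L(x+\gamma u)\ \text{ and }\ L^*q=u .
\]

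The isometry hypothesis is then used in a single line to merge the two descriptions: if $(p,q)\in\gra B$ satisfies $p+\gamma q=L(x+\gamma u)$, applying $L^*$ and using $L^*\circ L=\Id_\HH$ gives $L^*p+\gamma L^*q=x+\gamma u$, i.e.\ $L^*p-x=\gamma(u-L^*q)$, so that $L^*p=x\Leftrightarrow L^*q=u$ because $\gamma>0$. Hence the two graph conditions above are equivalent, and therefore $L\proxc{\gamma}B=L\proxcc{\gamma}B$. I expect the only real work to be the bookkeeping in the first two steps --- keeping straight the $\gamma$ versus $\gamma^{-1}$ while passing through the parallel composition, the shift $-\gamma^{-1}\Id_\HH$, and the inversion in \eqref{e:rescoco} --- after which the role of the isometry reduces to the elementary computation just given. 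An alternative route would pair Lemma~\ref{l:4}, which writes $L\proxcc{\gamma}B=L^*\circ(B^{-1}+\gamma\Psi)^{-1}\circ L$ with $\Psi=\Id_\GG-L\circ L^*$, with the companion formula $L\proxc{\gamma}B=\bigl(L^*\circ(B+\gamma^{-1}\Psi)^{-1}\circ L\bigr)^{-1}$ obtained by reading \eqref{e:rescoco} backwards and invoking Lemma~\ref{l:4} for $B^{-1}$; when $L$ is an isometry, $\Psi$ is the orthogonal projection onto $(\ran L)^{\perp}$ and $L^*\Psi=0=\Psi L$, which again collapses both sides to a common graph. I would nonetheless carry out the elementary graph computation above.
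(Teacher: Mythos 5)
Your argument is correct. Note that the paper itself offers no proof of this lemma: it is imported verbatim from \cite[Proposition~3.4(i)]{Svva25}, so there is no internal proof to compare against; what you have produced is a self-contained substitute at exactly the right level of generality (arbitrary set-valued $B$, no monotonicity), which is the appropriate way to argue here since the resolvent-type identities used elsewhere in the paper for $B\in\mathscr{S}(\GG)$ are indeed unavailable. I checked the two graph characterizations: unwinding \eqref{e:rescomp} via \eqref{e:parall} does give that $(x,u)\in\gra(L\proxc{\gamma}B)$ iff some $(p,q)\in\gra B$ satisfies $p+\gamma q=L(x+\gamma u)$ and $L^*p=x$, and applying this with $(B^{-1},1/\gamma)$, swapping the roles of $p$ and $q$ to pass from $\gra B^{-1}$ to $\gra B$, and inverting as in \eqref{e:rescoco} gives the companion description of $\gra(L\proxcc{\gamma}B)$ with the single condition $L^*p=x$ replaced by $L^*q=u$; your one-line use of $L^*\circ L=\Id_{\HH}$ then shows the two conditions are equivalent under the common constraint $p+\gamma q=L(x+\gamma u)$, since $L^*p-x=\gamma(u-L^*q)$. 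The alternative route you sketch through Lemma~\ref{l:4} and the identity $L\proxc{\gamma}B=\bigl(L^*\circ(B+\gamma^{-1}\Psi)^{-1}\circ L\bigr)^{-1}$ is closer in spirit to how the cited reference organizes such computations, but for set-valued $B$ it still requires a graph-level verification to collapse the two expressions, so your decision to run the direct graph computation is the cleaner choice; the only bookkeeping worth double-checking is the $\gamma$ versus $\gamma^{-1}$ swap when passing through \eqref{e:rescoco}, and you have it right.
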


\section{Resolvent compositions}
\label{sec:3}

In this section, we study the resolvent cocomposition operators
when $B\in\EuScript{S}(\GG)$. We strengthen several results
obtained in \cite{Svva25}, as well as those established
specifically for the resolvent average in \cite{Baus10}.
The results obtained include
comparisons among the different composite operations, as well as an
analysis of the asymptotic behavior of
$(L\proxcc{\gamma}B)_{\gamma\in\RPP}$ and
$(L\proxc{\gamma}B)_{\gamma\in\RPP}$, as the parameter $\gamma$
varies. 

\begin{proposition}
\label{p:29}
Suppose that $L\in\BL(\HH,\GG)$ satisfies $0<\|L\|\leq 1$, let
$B\in\EuScript{S}(\GG)$, and let $\gamma\in\RPP$. Then the
following hold:
\begin{enumerate}
\item
\label{p:29i}
$L\proxcc{\gamma}B\in\EuScript{P}(\HH)$.
\item
\label{p:29ii}
$L\proxcc{\gamma}\qq_B=\qq_{L\proxcc{\gamma}B}$.
\item
\label{p:29iii}
Let $\lambda\in\zeroun$. Then
$T_{\gamma}\colon\EuScript{S}(\GG)\to\EuScript{P}(\HH)\colon
A\mapsto L\proxcc{\gamma}A$ is concave in the sense that
\begin{equation}
\label{e:p29iii}
\brk1{\forall A\in\EuScript{S}(\GG)}\quad
\lambda\brk1{L\proxcc{\gamma}A}
+(1-\lambda)\brk1{L\proxcc{\gamma}B}\preccurlyeq
L\proxcc{\gamma}\brk1{\lambda A+(1-\lambda)B}.
\end{equation}
\item
\label{p:29iv}
Suppose that $L$ is bounded below. Then the following are
satisfied:
\begin{enumerate}
\item
\label{p:29iva}
$L\proxcc{\gamma}B\in\EuScript{S}(\HH)$ and
$L\proxc{\gamma}B\in\EuScript{S}(\HH)$.
\item
\label{p:29ivb}
$L\proxc{\gamma}\qq_B=\qq_{L\proxc{\gamma}B}$.
\item
\label{p:29ivc}
Let $\lambda\in\zeroun$. Then $R_{\gamma}\colon\EuScript{S}(\GG)\to
\EuScript{S}(\HH)\colon A\mapsto L\proxc{\gamma}A$ is concave in
the sense that
\begin{equation}
\label{e:p29ivc}
\brk1{\forall A\in\EuScript{S}(\GG)}\quad
\lambda\brk1{L\proxc{\gamma}A}
+(1-\lambda)\brk1{L\proxc{\gamma}B}\preccurlyeq
L\proxc{\gamma}\brk1{\lambda A+(1-\lambda)B}.
\end{equation}
\end{enumerate}
\end{enumerate}
\end{proposition}
\begin{proof}
Set $\Psi=\Id_{\GG}-L\circ L^*$. Since $\|L\|\leq 1$,
$\Psi\in\EuScript{P}(\GG)$, which yields
$B^{-1}+\gamma\Psi\in\EuScript{S}(\GG)$. On the other hand,
recall from Lemma~\ref{l:4} that
\begin{equation}
\label{e:p29}
L\proxcc{\gamma}B=L^*\circ\brk1{B^{-1}+\gamma\Psi}^{-1}\circ L.
\end{equation}

\ref{p:29i}: This follows from \eqref{e:p29} and 
Lemma~\ref{l:3}\ref{l:3i}.

\ref{p:29ii}: Set
$\Phi=(1/2)\|\cdot\|_{\GG}^2-(1/2)\|\cdot\|_{\HH}^2\circ L^*$ and
note that $\Phi=\qq_{\Psi}$. It follows from
Lemma~\ref{l:2}\ref{l:2iii}, Lemma~\ref{l:1}\ref{l:1i},
Lemma~\ref{l:3}\ref{l:3ii}, and \eqref{e:p29} that
\begin{align}
\label{e:200}
L\proxcc{\gamma}\qq_B&=\brk1{\qq_B^*+\gamma\Phi}^*\circ L
\nonumber\\
&=\brk1{\qq_{B^{-1}}+\gamma\qq_{\Psi}}^*\circ L\nonumber\\
&=\qq_{B^{-1}+\gamma\Psi}^*\circ L\nonumber\\
&=\qq_{L^*\circ\brk1{B^{-1}+\gamma\Psi}^{-1}\circ L}\nonumber\\
&=\qq_{L\proxcc{\gamma}B}.
\end{align}

\ref{p:29iii}:  
By \ref{p:29i}, $T_{\gamma}$ is well defined. Further, for every
$A\in\EuScript{S}(\GG)$, 
\begin{align}
\label{e:p29iiib}
&\lambda\brk1{L\proxcc{\gamma}A}
+(1-\lambda)\brk1{L\proxcc{\gamma}B}\preccurlyeq
L\proxcc{\gamma}\brk1{\lambda A+(1-\lambda)B}\nonumber\\
&\Leftrightarrow\brk1{\forall x\in\HH}\,\,
\lambda\Scal{\brk1{L\proxcc{\gamma}A}x}{x}_{\HH}
+(1-\lambda)\Scal{\brk1{L\proxcc{\gamma}B}x}{x}_{\HH}
\leq\Scal{
\brk2{L\proxcc{\gamma}\brk1{\lambda A+(1-\lambda)B}}x}{x}_{\HH}
\nonumber\\
&\Leftrightarrow\brk1{\forall x\in\HH}\,\,
\lambda\qq_{L\proxcc{\gamma}A}(x)
+(1-\lambda)\qq_{L\proxcc{\gamma}B}(x)
\leq\qq_{\lambda\brk1{L\proxcc{\gamma}A}
+(1-\lambda)\brk1{L\proxcc{\gamma}B}}(x).
\end{align}
Therefore, it is enough to prove that, for every
$x\in\HH$, the function $\EuScript{S}(\GG)\to\RR\colon A\mapsto
\qq_{L\proxcc{\gamma}A}(x)$ is concave. Set
$\Phi=(1/2)\|\cdot\|_{\GG}^2-(1/2)\|\cdot\|_{\HH}^2\circ L^*$.
Because $\dom\Phi=\GG$, the identity $(\gamma\Phi)^*=\Phi^*/\gamma$
and \cite[Proposition~15.2]{Livre1} imply that 
\begin{equation}
\label{e:conc1}
\brk1{\forall A\in\EuScript{S}(\GG)}\quad
\brk1{\qq_A^*+\gamma\Phi}^*=\qq_A\infconv\brk1{\Phi^*/\gamma}\colon
\GG\to\RX\colon z\mapsto\inf_{y\in\GG}\brk2{\qq_A(y)
+\dfrac{1}{\gamma}\Phi^*(z-y)}.
\end{equation}
Thus, by virtue of \ref{p:29ii}, Lemma~\ref{l:2}\ref{l:2iii}, and
\eqref{e:conc1},
\begin{align}
\brk1{\forall A\in\EuScript{S}(\GG)}
\brk1{\forall x\in\HH}\quad
\qq_{L\proxcc{\gamma}A}(x)&=\brk1{L\proxcc{\gamma}\qq_A}(x)
\nonumber\\
&=\brk1{\qq_A^*+\gamma\Phi}^*(Lx)\nonumber\\
&=\inf_{y\in\GG}\brk2{\underbrace{\qq_A(y)
+\dfrac{1}{\gamma}\Phi^*(Lx-y)}_{\text{affine in}\;\;A}}.
\end{align}
Hence, for every $x\in\HH$, the function 
$\EuScript{S}(\GG)\to\RR\colon
A\mapsto\qq_{L\proxcc{\gamma}A}(x)$ is concave, as it can be
expressed as the infimum of affine functions.

\ref{p:29iva}: It follows from \eqref{e:p29} and
Lemma~\ref{l:3}\ref{l:3iii} that
$L\proxcc{\gamma}B\in\EuScript{S}(\HH)$. On the other hand,
by \eqref{e:rescoco} and applying the previous reasoning to
$B^{-1}$, we obtain 
$L\proxc{\gamma}B=(L\proxcc{1/\gamma}B^{-1})^{-1}\in
\EuScript{S}(\HH)$.

\ref{p:29ivb}: By Lemma~\ref{l:2}\ref{l:2i},
Lemma~\ref{l:1}\ref{l:1i}, \ref{p:29ii}, and \eqref{e:rescoco},
\begin{equation}
L\proxc{\gamma}\qq_B=\brk1{L\proxcc{1/\gamma}\qq_B^*}^*
=\brk1{L\proxcc{1/\gamma}\qq_{B^{-1}}}^*
=\qq_{L\proxcc{1/\gamma}B^{-1}}^*
=\qq_{\brk1{L\proxcc{1/\gamma}B^{-1}}^{-1}}
=\qq_{L\proxc{\gamma}B}.
\end{equation}

\ref{p:29ivc}: It follows from Lemma~\ref{l:2}\ref{l:2iv} and
\ref{p:29ivb} that
\begin{equation}
\brk1{\forall A\in\EuScript{S}(\GG)}\brk1{\forall x\in\GG}
\quad \qq_{L\proxc{\gamma}A}(x)
=\brk1{L\proxc{\gamma}\qq_A}(x)
=\inf_{\substack{y\in\GG\\L^*y=x}}\brk2{
\underbrace{\qq_A(y)+\dfrac{1}{\gamma}\Phi(y)}_{
\text{affine in}\;\;A}}
\end{equation}
Thus, for every $x\in\HH$, the function
$\EuScript{S}(\GG)\to\RR\colon
A\mapsto\qq_{L\proxc{\gamma}A}(x)$ is concave. As a consequence, as
in the proof of \ref{p:29iii}, $R_{\gamma}$ is concave.
\end{proof}

The following example shows that, in the finite-dimensional
setting, the resolvent composition admits a variational
characterization. In particular, this holds for the resolvent
average, as established in \cite[Proposition~2.8]{Baus10}.

\begin{example}[variational characterization]
\label{ex:varc}
Suppose that $\HH$ and $\GG$ are finite-dimensional and that
$L\in\BL(\HH,\GG)$ satisfies $\|L\|\leq 1$ and $\ker L=\{0\}$,
let $B\in\EuScript{S}(\GG)$, and let $\gamma\in\RPP$. Define
\begin{equation}
\label{e:varf}
f\colon\EuScript{S}(\HH)\to\RR\colon X\mapsto
-\ln\det\brk1{X+\gamma^{-1}\Id_{\HH}}
\end{equation}
and 
\begin{equation}
\label{e:varF}
F\colon\EuScript{S}(\HH)\to\RR\colon X\mapsto 
f(X)+\Scal{L^*\circ(B+\gamma^{-1}\Id_{\GG})^{-1}\circ L}{X},
\end{equation}
where $\det(X)$ denotes the determinant of $X$ and 
$\scal{X}{B}$ denotes the trace of $X\circ B$.
Then $L\proxc{\gamma}B$ is the unique minimizer of $F$.
\begin{proof}
Since $\EuScript{S}(\HH)\to\EuScript{S}(\HH)\colon X\mapsto
X+\gamma^{-1}\Id_{\HH}$ is affine, 
\cite[Example~24.66 and Proposition~8.20]{Livre1} show that
$f$ is convex, differentiable, and that
$(\forall X\in\EuScript{S}(\HH))$ 
$\nabla f(X)=-(X+\gamma^{-1}\Id_{\HH})^{-1}$. Thus,
$F$ is also convex and differentiable, being the sum of $f$ and an
affine function. Therefore, by virtue of \cite[Theorem~16.3 and
Proposition~17.31(i)]{Livre1}, it suffices to find the critical
points of $F$, that is, to solve $\nabla F(X)=0$. Altogether,
Proposition~\ref{p:29}\ref{p:29iva} ensures that
$L\proxc{\gamma}B\in\EuScript{S}(\HH)$, and
\begin{align}
\nabla F(X)=0&\Leftrightarrow-\brk1{X+\gamma^{-1}\Id_{\HH}}^{-1}
+L^*\circ\brk1{B+\gamma^{-1}\Id_{\GG}}^{-1}\circ L=0\nonumber\\
&\Leftrightarrow X+\gamma^{-1}\Id_{\HH}
=L^*\pushfwd(B+\gamma^{-1}\Id_{\GG})\nonumber\\
&\Leftrightarrow X=L\proxc{\gamma}B,
\end{align}
which completes the proof.
\end{proof}
\end{example}

We now focus on L\"{o}wner partial ordering relations for resolvent
compositions. These ordering relations will assist us in studying
the convergence properties of resolvent compositions
$L\proxc{\gamma}B$ and $L\proxcc{\gamma}B$, as well as of the new
interpolation $\mathcal{L}_{\gamma}(L,B)$ introduced in
Section~\ref{sec:5}, as $\gamma$ varies. 

\begin{proposition}
\label{p:30}
Suppose that $L\in\BL(\HH,\GG)$ satisfies $0<\|L\|\leq 1$, let
$B\in\EuScript{S}(\GG)$, and let $\gamma\in\RPP$. Then the
following hold:
\begin{enumerate}
\item
\label{p:30i}
Set $\theta=1/(1+\gamma\|B\|)$. Then
$\theta(L^*\circ B\circ L)\preccurlyeq L\proxcc{\gamma}B
\preccurlyeq L^*\circ B\circ L$.
\item
\label{p:30ii}
Suppose that $A\in\EuScript{S}(\GG)$ satisfies
$A\preccurlyeq B$. Then
$L\proxcc{\gamma}A\preccurlyeq L\proxcc{\gamma}B$.
\item
\label{p:30iii}
Let $\rho\in\RPP$ be such that $\rho\leq\gamma$. Then
$L\proxcc{\gamma}B\preccurlyeq L\proxcc{\rho}B$.
\end{enumerate}
\end{proposition}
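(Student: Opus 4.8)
The key tool is Lemma~\ref{l:4}, which represents the cocomposition as $L\proxcc{\gamma}B=L^*\circ(B^{-1}+\gamma\Psi)^{-1}\circ L$ with $\Psi=\Id_{\GG}-L\circ L^*\in\mathscr{P}(\GG)$ (the latter because $\|L\|\le 1$). Every assertion will be reduced to a Löwner comparison of the inner operator $(B^{-1}+\gamma\Psi)^{-1}$ against $B$ (for \ref{p:30i}), against the analogous operator built from $A$ (for \ref{p:30ii}), and against the operator built with parameter $\rho$ (for \ref{p:30iii}), after which Lemma~\ref{l:1}\ref{l:1iii} transports the inequality through the congruence $X\mapsto L^*\circ X\circ L$.

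\emph{Proof of \ref{p:30i}.} Since $\Psi\in\mathscr{P}(\GG)$, we have $B^{-1}\preccurlyeq B^{-1}+\gamma\Psi$, hence by Lemma~\ref{l:1}\ref{l:1ii} $(B^{-1}+\gamma\Psi)^{-1}\preccurlyeq (B^{-1})^{-1}=B$; applying Lemma~\ref{l:1}\ref{l:1iii} and \eqref{e:p29} gives $L\proxcc{\gamma}B\preccurlyeq L^*\circ B\circ L$. For the lower bound, observe $\|L\|\le 1$ yields $\Psi=\Id_{\GG}-L\circ L^*\preccurlyeq\Id_{\GG}$, and since $B\in\mathscr{S}(\GG)$ implies $B\preccurlyeq\|B\|\Id_{\GG}$, i.e.\ $\Id_{\GG}\preccurlyeq\|B\|B^{-1}$ by Lemma~\ref{l:1}\ref{l:1ii}, we obtain $\gamma\Psi\preccurlyeq\gamma\|B\|B^{-1}$ and thus $B^{-1}+\gamma\Psi\preccurlyeq(1+\gamma\|B\|)B^{-1}$. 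Inverting via Lemma~\ref{l:1}\ref{l:1ii} gives $\theta B=\brk1{(1+\gamma\|B\|)B^{-1}}^{-1}\preccurlyeq(B^{-1}+\gamma\Psi)^{-1}$, and one more application of Lemma~\ref{l:1}\ref{l:1iii} yields $\theta(L^*\circ B\circ L)\preccurlyeq L\proxcc{\gamma}B$.

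\emph{Proof of \ref{p:30ii}.} From $A\preccurlyeq B$ and Lemma~\ref{l:1}\ref{l:1ii} we get $B^{-1}\preccurlyeq A^{-1}$, hence $B^{-1}+\gamma\Psi\preccurlyeq A^{-1}+\gamma\Psi$ (both operators lie in $\mathscr{S}(\GG)$), so $(A^{-1}+\gamma\Psi)^{-1}\preccurlyeq(B^{-1}+\gamma\Psi)^{-1}$ by Lemma~\ref{l:1}\ref{l:1ii} again. Applying Lemma~\ref{l:1}\ref{l:1iii} and \eqref{e:p29} to $A$ and $B$ gives $L\proxcc{\gamma}A\preccurlyeq L\proxcc{\gamma}B$.

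\emph{Proof of \ref{p:30iii}.} With $\rho\le\gamma$ and $\Psi\in\mathscr{P}(\GG)$ we have $\gamma\Psi-\rho\Psi=(\gamma-\rho)\Psi\in\mathscr{P}(\GG)$, so $B^{-1}+\rho\Psi\preccurlyeq B^{-1}+\gamma\Psi$; inverting via Lemma~\ref{l:1}\ref{l:1ii} and composing with $L^*,L$ via Lemma~\ref{l:1}\ref{l:1iii} yields $L\proxcc{\gamma}B\preccurlyeq L\proxcc{\rho}B$.

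The only genuinely delicate point is the lower bound in \ref{p:30i}: one must exploit $\|L\|\le 1$ and the strict positivity of $B$ simultaneously to sandwich $\gamma\Psi$ by a multiple of $B^{-1}$, and it is important here that $\|B\|<\infty$ (so $\theta>0$) and $B\in\mathscr{S}(\GG)$ (so $B^{-1}$ exists). Everything else is a routine chain of applications of the monotonicity and inversion-reversal properties recorded in Lemma~\ref{l:1}.
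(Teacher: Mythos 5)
Your proposal is correct and follows essentially the same route as the paper: both rely on the representation of Lemma~\ref{l:4} together with the inversion-reversal and congruence properties of Lemma~\ref{l:1}\ref{l:1ii}--\ref{l:1iii}, and your handling of the lower bound in \ref{p:30i} (sandwiching $\gamma\Psi$ by $\gamma\|B\|B^{-1}$ via $\Psi\preccurlyeq\Id_{\GG}\preccurlyeq\|B\|B^{-1}$) is the same chain the paper writes, merely combined in one step. No gaps.
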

\begin{proof}
Set $\Psi=\Id_{\GG}-L\circ L^*$ and recall that
$L\proxcc{\gamma}B=L^*\circ(B^{-1}+\gamma\Psi)^{-1}\circ L$ by
Lemma~\ref{l:4}.

\ref{p:30i}: Note that $B\preccurlyeq\|B\|\;\Id_{\GG}$ and that
Lemma~\ref{l:1}\ref{l:1ii} implies that
$\Id_{\GG}\preccurlyeq\|B\|\;B^{-1}$. Since
$0\preccurlyeq\Psi\preccurlyeq\Id_{\GG}$,
\begin{equation}
B^{-1}\preccurlyeq B^{-1}+\gamma\Psi\preccurlyeq
B^{-1}+\gamma\Id_{\GG}\preccurlyeq(1+\gamma\|B\|)\;B^{-1},
\end{equation}
and, by virtue of Lemma~\ref{l:1}\ref{l:1ii},
\begin{equation}
\label{e:p30}
\theta B\preccurlyeq\brk1{B^{-1}+\gamma\Psi}^{-1}\preccurlyeq B.
\end{equation}
Hence, we deduce from \eqref{e:p30} and Lemma~\ref{l:1}\ref{l:1iii}
that
\begin{align}
\theta(L^*\circ B\circ L)\preccurlyeq L\proxcc{\gamma}B\preccurlyeq
L^*\circ B\circ L.
\end{align}

\ref{p:30ii}: Since $\Psi\in\EuScript{P}(\GG)$,
$A^{-1}+\gamma\Psi$ and $B^{-1}+\gamma\Psi$ are in
$\EuScript{S}(\GG)$. Further, by Lemma~\ref{l:1}\ref{l:1ii} and the
fact that $A\preccurlyeq B$,
$B^{-1}+\gamma\Psi\preccurlyeq A^{-1}+\gamma\Psi$. Thus,
$(A^{-1}+\gamma\Psi)^{-1}\preccurlyeq(B^{-1}+\gamma\Psi)^{-1}$.
Altogether, we deduce from Lemma~\ref{l:1}\ref{l:1iii} that
\begin{equation}
L\proxcc{\gamma}A=L^*\circ\brk1{A^{-1}+\gamma\Psi}^{-1}\circ L
\preccurlyeq L^*\circ\brk1{B^{-1}+\gamma\Psi}^{-1}\circ L
=L\proxcc{\gamma}B.
\end{equation}

\ref{p:30iii}: Note that $B^{-1}+\gamma\Psi$ and $B^{-1}+\rho\Psi$
are in $\EuScript{S}(\GG)$ and that
$B^{-1}+\rho\Psi\preccurlyeq B^{-1}+\gamma\Psi$. Therefore,
Lemma~\ref{l:1}\ref{l:1ii}-\ref{l:1iii} yields
\begin{align}
L\proxcc{\gamma}B=L^*\circ\brk1{B^{-1}+\gamma\Psi}^{-1}\circ L
\preccurlyeq L^*\circ\brk1{B^{-1}+\rho\Psi}^{-1}\circ L
=L\proxcc{\rho}B,
\end{align}
as claimed.
\end{proof}

\begin{corollary}
\label{c:31}
Suppose that $L\in\BL(\HH,\GG)$ is bounded below and satisfies
$\|L\|\leq 1$, let $B\in\EuScript{S}(\GG)$, and let
$\gamma\in\RPP$. Then the following hold:
\begin{enumerate}
\item
\label{c:31i}
Set $\omega=1+\|B^{-1}\|/\gamma$. Then
$L^*\pushfwd B\preccurlyeq L\proxc{\gamma}B\preccurlyeq
\omega(L^*\pushfwd B)$.
\item
\label{c:31ii}
$L\proxcc{\gamma}B\preccurlyeq L\proxc{\gamma}B$.
\item
\label{c:31iii}
Suppose that $A\in\EuScript{S}(\GG)$ satisfies
$A\preccurlyeq B$. Then
$L\proxc{\gamma}A\preccurlyeq L\proxc{\gamma}B$.
\item
\label{c:31iv}
Let $\rho\in\RPP$ be such that $\rho\leq\gamma$. Then
$L\proxc{\gamma}B\preccurlyeq L\proxc{\rho}B$.
\end{enumerate}
\end{corollary}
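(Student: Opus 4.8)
The strategy is to obtain each assertion by applying the corresponding part of Proposition~\ref{p:30} to the operator $B^{-1}$, together with the defining identity $L\proxc{\gamma}B=(L\proxcc{1/\gamma}B^{-1})^{-1}$ from \eqref{e:rescoco} and the order-reversing property of inversion in Lemma~\ref{l:1}\ref{l:1ii}. Note first that $B\in\mathscr{S}(\GG)$ forces $B^{-1}\in\mathscr{S}(\GG)$, and since $L$ is bounded below, Proposition~\ref{p:29}\ref{p:29iva} guarantees that $L\proxc{\gamma}B\in\mathscr{S}(\HH)$ and $L\proxcc{1/\gamma}B^{-1}\in\mathscr{S}(\HH)$, so all inverses appearing below are well defined. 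The main point is simply to track how each partial order relation flips under the two successive inversions.

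For \ref{c:31i}: apply Proposition~\ref{p:30}\ref{p:30i} with $B$ replaced by $B^{-1}$ and $\gamma$ replaced by $1/\gamma$. This gives $\theta'(L^*\circ B^{-1}\circ L)\preccurlyeq L\proxcc{1/\gamma}B^{-1}\preccurlyeq L^*\circ B^{-1}\circ L$, where $\theta'=1/(1+\|B^{-1}\|/\gamma)=1/\omega$. Since $L$ is bounded below, $L^*\circ B^{-1}\circ L\in\mathscr{S}(\HH)$ by Lemma~\ref{l:3}\ref{l:3iii}, and \eqref{e:parall} gives $(L^*\circ B^{-1}\circ L)^{-1}=L^*\pushfwd B$. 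Applying Lemma~\ref{l:1}\ref{l:1ii} to invert the double inequality reverses it, yielding $L^*\pushfwd B\preccurlyeq(L\proxcc{1/\gamma}B^{-1})^{-1}\preccurlyeq\omega(L^*\pushfwd B)$, i.e. the claim after recognizing $(L\proxcc{1/\gamma}B^{-1})^{-1}=L\proxc{\gamma}B$. For \ref{c:31ii}: by Proposition~\ref{p:30}\ref{p:30i} again, $L\proxcc{1/\gamma}B^{-1}\preccurlyeq L^*\circ B^{-1}\circ L$; inverting via Lemma~\ref{l:1}\ref{l:1ii} gives $L^*\pushfwd B=(L^*\circ B^{-1}\circ L)^{-1}\preccurlyeq L\proxc{\gamma}B$, while Proposition~\ref{p:29}\ref{p:29iva} and Lemma~\ref{l:4} (or alternatively the $\|L\|\le1$ version of Lemma~\ref{l:2}\ref{l:2ii}, which applies here since we may also invoke Lemma~\ref{l:5} when $L$ is an isometry) — actually the cleanest route is: apply Lemma~\ref{l:4} to $B^{-1}$ to note $L\proxcc{\gamma}B$ and $L\proxc{\gamma}B=(L\proxcc{1/\gamma}B^{-1})^{-1}$, then compare $B^{-1}+\gamma\Psi$ against the operator whose inverse defines $L\proxc{\gamma}B$; concretely, since $\Psi\preccurlyeq\Id_{\GG}$ one checks $(B^{-1}+\gamma\Psi)^{-1}\preccurlyeq(B^{-1}+\gamma(B^{-1}\text{-side}))$... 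I would instead simply cite that $L\proxcc{\gamma}B\preccurlyeq L^*\circ B\circ L$ and $L^*\pushfwd B\preccurlyeq L\proxc{\gamma}B$ from \ref{p:30i} and \ref{c:31i}, combined with the harmonic–arithmetic inequality $L^*\pushfwd B\preccurlyeq L^*\circ B\circ L$ — but that does not immediately chain. The honest shortest path for \ref{c:31ii} is to transcribe Lemma~\ref{l:2}\ref{l:2ii} at the operator level: since $0<\|L\|\le1$ and $B\in\mathscr{S}(\GG)\subseteq\Gamma_0$-quadratic, Proposition~\ref{p:29}\ref{p:29ii} and \ref{p:29ivb} identify $\qq_{L\proxcc{\gamma}B}=L\proxcc{\gamma}\qq_B\le L\proxc{\gamma}\qq_B=\qq_{L\proxc{\gamma}B}$, and comparison of quadratic kernels is exactly the L\"owner order.

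For \ref{c:31iii}: $A\preccurlyeq B$ gives $B^{-1}\preccurlyeq A^{-1}$ by Lemma~\ref{l:1}\ref{l:1ii}; apply Proposition~\ref{p:30}\ref{p:30ii} (with parameter $1/\gamma$, operators $B^{-1}\preccurlyeq A^{-1}$) to get $L\proxcc{1/\gamma}B^{-1}\preccurlyeq L\proxcc{1/\gamma}A^{-1}$; inverting reverses the order to $(L\proxcc{1/\gamma}A^{-1})^{-1}\preccurlyeq(L\proxcc{1/\gamma}B^{-1})^{-1}$, i.e. $L\proxc{\gamma}A\preccurlyeq L\proxc{\gamma}B$. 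For \ref{c:31iv}: $\rho\le\gamma$ gives $1/\gamma\le1/\rho$; apply Proposition~\ref{p:30}\ref{p:30iii} with operator $B^{-1}$ and parameters $1/\gamma\le1/\rho$ to obtain $L\proxcc{1/\rho}B^{-1}\preccurlyeq L\proxcc{1/\gamma}B^{-1}$; inverting reverses this to $(L\proxcc{1/\gamma}B^{-1})^{-1}\preccurlyeq(L\proxcc{1/\rho}B^{-1})^{-1}$, which is $L\proxc{\gamma}B\preccurlyeq L\proxc{\rho}B$. The only real bookkeeping hazard — and the step I would double-check most carefully — is getting the direction of every inequality correct through the two reversals (one from $B\mapsto B^{-1}$, one from the outer inversion in \eqref{e:rescoco}); in each item these two reversals compose, and combined with the parameter substitution $\gamma\mapsto1/\gamma$ it is easy to flip a sign of monotonicity by mistake, so each final inequality should be sanity-checked against the limiting behaviour ($\gamma\to\pinf$ giving $L^*\pushfwd B$, $0<\gamma\to0$ giving the larger operators).
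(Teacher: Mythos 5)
Your proposal is correct and follows essentially the same route as the paper: items \ref{c:31i}, \ref{c:31iii}, and \ref{c:31iv} by applying Proposition~\ref{p:30} to $B^{-1}$ with parameter $1/\gamma$ and reversing the order through \eqref{e:rescoco} and Lemma~\ref{l:1}\ref{l:1ii}, and item \ref{c:31ii} via the quadratic-kernel comparison $\qq_{L\proxcc{\gamma}B}=L\proxcc{\gamma}\qq_B\le L\proxc{\gamma}\qq_B=\qq_{L\proxc{\gamma}B}$ using Proposition~\ref{p:29}\ref{p:29ii}, Lemma~\ref{l:2}\ref{l:2ii}, and Proposition~\ref{p:29}\ref{p:29ivb}. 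The abandoned detours in your discussion of \ref{c:31ii} are harmless, since the argument you ultimately commit to is exactly the paper's.
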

\begin{proof}
By Proposition~\ref{p:29}\ref{p:29iva},
$L\proxc{\gamma}B\in\EuScript{S}(\HH)$. Further, recall that
\eqref{e:rescoco} yields
$L\proxc{\gamma}B=(L\proxcc{1/\gamma}B^{-1})^{-1}$.

\ref{c:31i}: This follows from Lemma~\ref{l:1}\ref{l:1ii} and
Proposition~\ref{p:30}\ref{p:30i} applied to $B^{-1}$ and
$1/\gamma$.

\ref{c:31ii}: By Proposition~\ref{p:29}\ref{p:29ii},
Lemma~\ref{l:2}\ref{l:2ii}, and
Proposition~\ref{p:29}\ref{p:29ivb},
\begin{equation}
\qq_{L\proxcc{\gamma}B}=L\proxcc{\gamma}\qq_B\leq 
L\proxc{\gamma}\qq_B=\qq_{L\proxc{\gamma}B}.
\end{equation}
Therefore, $L\proxcc{\gamma}B\preccurlyeq
L\proxc{\gamma}B$.

\ref{c:31iii}: This follows from Lemma~\ref{l:1}\ref{l:1ii} and
Proposition~\ref{p:30}\ref{p:30ii} applied to $B^{-1}$ and
$1/\gamma$.

\ref{c:31iv}: This follows from Lemma~\ref{l:1}\ref{l:1ii} and
Proposition~\ref{p:30}\ref{p:30iii} applied to $B^{-1}$ and
$1/\gamma$.
\end{proof}

\begin{corollary}
Suppose that $L\in\BL(\HH,\GG)$ is bounded below and satisfies
$\|L\|\leq 1$, let $B\in\EuScript{S}(\GG)$, and set
$\kappa=\|B\|\;\|B^{-1}\|$ and
$\rho=(1+\sqrt{\kappa})^2$. Then
$L^*\circ B\circ L\preccurlyeq\rho(L^*\pushfwd B)$.
\end{corollary}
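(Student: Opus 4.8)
The plan is to concatenate, for a free parameter $\gamma\in\RPP$, the two-sided L\"{o}wner estimates already at our disposal---Proposition~\ref{p:30}\ref{p:30i}, which bounds $L^*\circ B\circ L$ from above by a multiple of $L\proxcc{\gamma}B$, and Corollary~\ref{c:31}\ref{c:31i}--\ref{c:31ii}, which bounds $L\proxc{\gamma}B$, hence $L\proxcc{\gamma}B$, from above by a multiple of $L^*\pushfwd B$---and then to choose $\gamma$ so as to minimize the resulting constant.

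First I would observe that, since $L$ is bounded below, $0<\|L\|\leq 1$ (the degenerate case $\HH=\{0\}$ being trivial), so that Proposition~\ref{p:30} and Corollary~\ref{c:31} both apply. Fixing an arbitrary $\gamma\in\RPP$ and writing $\theta=1/(1+\gamma\|B\|)$ and $\omega=1+\|B^{-1}\|/\gamma$, I would chain
\[
\theta\brk1{L^*\circ B\circ L}\preccurlyeq L\proxcc{\gamma}B\preccurlyeq L\proxc{\gamma}B\preccurlyeq\omega\brk1{L^*\pushfwd B},
\]
the three relations coming respectively from Proposition~\ref{p:30}\ref{p:30i}, Corollary~\ref{c:31}\ref{c:31ii}, and Corollary~\ref{c:31}\ref{c:31i}. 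Since $\mathscr{P}(\HH)$ is a convex cone and $\theta>0$, this yields
\[
L^*\circ B\circ L\preccurlyeq\frac{\omega}{\theta}\brk1{L^*\pushfwd B}
=\brk2{1+\gamma\|B\|}\brk2{1+\frac{\|B^{-1}\|}{\gamma}}\brk1{L^*\pushfwd B}.
\]

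To finish I would optimize the constant $c(\gamma)=(1+\gamma\|B\|)(1+\|B^{-1}\|/\gamma)=1+\kappa+\gamma\|B\|+\|B^{-1}\|/\gamma$ over $\gamma\in\RPP$. As $B\in\mathscr{S}(\GG)$ forces $\|B\|>0$ and $\|B^{-1}\|>0$, the arithmetic--geometric mean inequality gives $\gamma\|B\|+\|B^{-1}\|/\gamma\geq 2\sqrt{\|B\|\,\|B^{-1}\|}=2\sqrt{\kappa}$, with equality at $\gamma_0=\sqrt{\|B^{-1}\|/\|B\|}\in\RPP$; for this value $\gamma_0\|B\|=\|B^{-1}\|/\gamma_0=\sqrt{\kappa}$, so $c(\gamma_0)=1+2\sqrt{\kappa}+\kappa=(1+\sqrt{\kappa})^2=\rho$, and specializing the last display to $\gamma=\gamma_0$ gives $L^*\circ B\circ L\preccurlyeq\rho\brk1{L^*\pushfwd B}$. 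There is no genuine obstacle in this argument: it is essentially a bookkeeping exercise combining earlier results, the only point of substance being the balancing choice of $\gamma_0$, which equalizes the inflation factors $1/\theta$ and $\omega$ attached to the lower and upper ends of the chain.
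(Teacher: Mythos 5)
Your proposal is correct and follows essentially the same route as the paper: it chains Proposition~\ref{p:30}\ref{p:30i}, Corollary~\ref{c:31}\ref{c:31ii}, and Corollary~\ref{c:31}\ref{c:31i} to obtain $L^*\circ B\circ L\preccurlyeq(1+\gamma\|B\|)(1+\|B^{-1}\|/\gamma)(L^*\pushfwd B)$ for every $\gamma\in\RPP$ and then minimizes the constant over $\gamma$, which is exactly the paper's argument (your AM--GM computation simply makes explicit the paper's claim that $\rho=\min_{\gamma\in\RPP}f(\gamma)$).
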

\begin{proof}
Set $f\colon\RPP\to\RPP\colon\gamma\to
(1+\gamma\|B\|)(1+\|B^{-1}\|/\gamma)$. By 
Proposition~\ref{p:30}\ref{p:30i},
Corollary~\ref{c:31}\ref{c:31ii}, and
Corollary~\ref{c:31}\ref{c:31i},
\begin{equation}
\label{e:c32}
(\forall\gamma\in\RPP)\quad
L^*\circ B\circ L\preccurlyeq f(\gamma)\brk1{L^*\pushfwd B}.
\end{equation}
Since $\rho=\min_{\gamma\in\RPP}f(\gamma)$, the assertion follows
from \eqref{e:c32}. 
\end{proof}

We now present the main result of this section. In contrast to
\cite[Propositions~5.8 and 5.12(i)]{Svva25}, which establish
graph convergence of resolvent compositions, the following theorem
provides asymptotic behavior of resolvent compositions in operator
norm, which is stronger than graph convergence and therefore
offers additional stability properties.

\begin{theorem}
\label{t:53}
Suppose that $L\in\BL(\HH,\GG)$ satisfies $0<\|L\|\leq 1$, and let
$B\in\EuScript{S}(\GG)$. Then the following hold:
\begin{enumerate}
\item
\label{t:53i}
$L\proxcc{\gamma}B\to L^*\circ B\circ L$ as $0<\gamma\to 0$.
\item
\label{t:53ii}
Suppose that $L$ is bounded below. Then
$L\proxc{\gamma}B\to L^*\pushfwd B$ as $\gamma\to\pinf$.
\end{enumerate}
\end{theorem}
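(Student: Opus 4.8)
The plan is to read off both limits from the two-sided L\"{o}wner estimates already established in Proposition~\ref{p:30}\ref{p:30i} and Corollary~\ref{c:31}\ref{c:31i}, converting each order sandwich into an operator-norm estimate by means of Lemma~\ref{l:1}\ref{l:1iv}.

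\emph{Part \ref{t:53i}.} Set $\theta_{\gamma}=1/(1+\gamma\|B\|)\in\zeroun$. Proposition~\ref{p:30}\ref{p:30i} gives
\begin{equation*}
\theta_{\gamma}(L^*\circ B\circ L)\preccurlyeq L\proxcc{\gamma}B\preccurlyeq L^*\circ B\circ L,
\end{equation*}
where the three operators lie in $\mathscr{P}(\HH)$ by Lemma~\ref{l:3}\ref{l:3i} and Proposition~\ref{p:29}\ref{p:29i}. Subtracting $L\proxcc{\gamma}B$ from the outer terms yields
\begin{equation*}
0\preccurlyeq (L^*\circ B\circ L)-(L\proxcc{\gamma}B)\preccurlyeq(1-\theta_{\gamma})(L^*\circ B\circ L),
\end{equation*}
so Lemma~\ref{l:1}\ref{l:1iv} gives $\|(L\proxcc{\gamma}B)-(L^*\circ B\circ L)\|\leq(1-\theta_{\gamma})\|L^*\circ B\circ L\|$. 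Since $\theta_{\gamma}\to1$ as $0<\gamma\to0$, the right-hand side tends to $0$, which is the assertion.

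\emph{Part \ref{t:53ii}.} With $L$ bounded below we have $L^*\pushfwd B\in\mathscr{S}(\HH)\subseteq\mathscr{P}(\HH)$ by Lemma~\ref{l:3}\ref{l:3iii}, and Corollary~\ref{c:31}\ref{c:31i} applies: setting $\omega_{\gamma}=1+\|B^{-1}\|/\gamma$,
\begin{equation*}
L^*\pushfwd B\preccurlyeq L\proxc{\gamma}B\preccurlyeq\omega_{\gamma}(L^*\pushfwd B).
\end{equation*}
The same manipulation as in Part~\ref{t:53i}, followed by Lemma~\ref{l:1}\ref{l:1iv}, gives $\|(L\proxc{\gamma}B)-(L^*\pushfwd B)\|\leq(\omega_{\gamma}-1)\|L^*\pushfwd B\|=(\|B^{-1}\|/\gamma)\,\|L^*\pushfwd B\|$, which tends to $0$ as $\gamma\to\pinf$.

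\emph{On the main obstacle.} There is in fact no serious obstacle here, since the substantive work has already been carried out in the order-bound results; the only points needing attention are that the differenced operators be positive (so that Lemma~\ref{l:1}\ref{l:1iv} is applicable) and that, in Part~\ref{t:53ii}, the hypothesis that $L$ be bounded below is genuinely used -- both to keep $L^*\pushfwd B$ inside $\mathscr{S}(\HH)$ and to make Corollary~\ref{c:31}\ref{c:31i} available. A more self-contained argument that sidesteps Proposition~\ref{p:30} would instead start from Lemma~\ref{l:4}, writing $L\proxcc{\gamma}B=L^*\circ(B^{-1}+\gamma\Psi)^{-1}\circ L$ with $\Psi=\Id_{\GG}-L\circ L^*\in\mathscr{P}(\GG)$, observe that $B^{-1}+\gamma\Psi\to B^{-1}$ in operator norm while remaining uniformly bounded below (because $B^{-1}\in\mathscr{S}(\GG)$ and $\gamma\Psi\succcurlyeq0$), and then use continuity of operator inversion via $S^{-1}-T^{-1}=S^{-1}(T-S)T^{-1}$; Part~\ref{t:53ii} would follow by applying Part~\ref{t:53i} to $B^{-1}$ and parameter $1/\gamma$, combined with \eqref{e:rescoco}, \eqref{e:parall}, and one further inversion-continuity step (legitimate because $L^*\circ B^{-1}\circ L\in\mathscr{S}(\HH)$ and $L\proxcc{1/\gamma}B^{-1}$ stays uniformly bounded below for large $\gamma$).
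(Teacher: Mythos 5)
Your proposal is correct and follows essentially the same route as the paper: both parts sandwich the difference operator using Proposition~\ref{p:30}\ref{p:30i} and Corollary~\ref{c:31}\ref{c:31i}, then pass to operator norms via Lemma~\ref{l:1}\ref{l:1iv}. The only (immaterial) difference is your constant $1-\theta_{\gamma}$ in part~\ref{t:53i}, which is a slightly sharper bound than the paper's $(1-\theta_{\gamma})/\theta_{\gamma}$; both tend to $0$ as $0<\gamma\to 0$.
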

\begin{proof}
\ref{t:53i}: Set $(\forall\gamma\in\RPP)$ 
$\theta_{\gamma}=1/(1+\gamma\|B\|)$ and 
$D_{\gamma}=(L^*\circ B\circ L)-(L\proxcc{\gamma}B)$. By
Proposition~\ref{p:30}\ref{p:30i},
\begin{equation}
\label{e:t53a}
0\preccurlyeq D_{\gamma}\preccurlyeq
\brk2{\dfrac{1-\theta_{\gamma}}{\theta_{\gamma}}}\;
(L^*\circ B\circ L).
\end{equation}
In addition, note that $\theta_{\gamma}\to 1$ as $0<\gamma\to 0$.
Therefore, it follows from \eqref{e:t53a} and
Lemma~\ref{l:1}\ref{l:1iv} that 
\begin{equation}
\|D_{\gamma}\|\leq
\brk2{\dfrac{1-\theta_{\gamma}}{\theta_{\gamma}}}
\;\|L^*\circ B\circ L\|\to 0\;\;\text{as}\;\;
0<\gamma\to 0.
\end{equation}

\ref{t:53ii}: Set $(\forall\gamma\in\RPP)$
$\omega_{\gamma}=1+\|B^{-1}\|/\gamma$ and
$D_{\gamma}=(L\proxc{\gamma}B)-(L^*\pushfwd B)$. By
Corollary~\ref{c:31}\ref{c:31i},
\begin{equation}
\label{e:t53b}
0\preccurlyeq D_{\gamma}\preccurlyeq
(\omega_{\gamma}-1)\;(L^*\pushfwd B).
\end{equation}
Also, note that $\omega_{\gamma}\to 1$ as $\gamma\to\pinf$.
Therefore, we combine \eqref{e:t53b} and Lemma~\ref{l:1}\ref{l:1iv}
to obtain
\begin{equation}
\|D_{\gamma}\|\leq
(\omega_{\gamma}-1)\;\|L^*\pushfwd B\|\to 0\;\;\text{as}\;\;
0<\gamma\to\pinf,
\end{equation}
which completes the proof.
\end{proof}

\begin{corollary}
\label{c:100}
Suppose that $L\in\BL(\HH,\GG)$ is bounded below and satisfies 
$\|L\|\leq 1$. Then the operator
$R\colon\EuScript{S}(\GG)\to\EuScript{S}(\HH)\colon A\mapsto 
L^*\pushfwd A$ is concave in the sense that
\begin{equation}
\label{e:c100a}
\brk1{\forall\lambda\in\zeroun}
\brk1{\forall A\in\EuScript{S}(\GG)}
\brk1{\forall B\in\EuScript{S}(\GG)}\quad
\lambda(L^*\pushfwd A)+(1-\lambda)(L^*\pushfwd B)\preccurlyeq
L^*\pushfwd\brk1{\lambda A+(1-\lambda)B}.
\end{equation} 
\end{corollary}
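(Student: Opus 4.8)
The plan is to deduce the concavity of $R$ from the concavity of the resolvent compositions $R_{\gamma}\colon A\mapsto L\proxc{\gamma}A$ obtained in Proposition~\ref{p:29}\ref{p:29ivc}, by passing to the limit $\gamma\to\pinf$ with the help of Theorem~\ref{t:53}\ref{t:53ii}. Note first that, under the present hypotheses, $R$ does map $\mathscr{S}(\GG)$ into $\mathscr{S}(\HH)$ by Lemma~\ref{l:3}\ref{l:3iii}, so only the displayed inequality is at stake.

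First I would fix $\lambda\in\zeroun$ together with $A\in\mathscr{S}(\GG)$ and $B\in\mathscr{S}(\GG)$, and observe that $\lambda A+(1-\lambda)B\in\mathscr{S}(\GG)$ as well, since a positive convex combination of operators in $\mathscr{S}(\GG)$ is again self-adjoint, bounded, and bounded below by a strictly positive multiple of $\Id_{\GG}$. Then, for every $\gamma\in\RPP$, Proposition~\ref{p:29}\ref{p:29ivc} yields
\[
\lambda\brk1{L\proxc{\gamma}A}+(1-\lambda)\brk1{L\proxc{\gamma}B}
\preccurlyeq L\proxc{\gamma}\brk1{\lambda A+(1-\lambda)B}.
\]

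Next I would let $\gamma\to\pinf$. Applying Theorem~\ref{t:53}\ref{t:53ii} in turn to $A$, to $B$, and to $\lambda A+(1-\lambda)B$ shows that $L\proxc{\gamma}A$, $L\proxc{\gamma}B$, and $L\proxc{\gamma}(\lambda A+(1-\lambda)B)$ converge in the operator norm of $\BL(\HH)$ to $L^*\pushfwd A$, $L^*\pushfwd B$, and $L^*\pushfwd(\lambda A+(1-\lambda)B)$, respectively. Consequently, the left-hand side of the displayed inequality converges in norm to $\lambda(L^*\pushfwd A)+(1-\lambda)(L^*\pushfwd B)$ and the right-hand side to $L^*\pushfwd(\lambda A+(1-\lambda)B)$.

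Finally, I would invoke the closedness of the L\"owner cone under norm limits. Setting $C_{\gamma}=L\proxc{\gamma}(\lambda A+(1-\lambda)B)-\lambda(L\proxc{\gamma}A)-(1-\lambda)(L\proxc{\gamma}B)$, the previous display gives $C_{\gamma}\in\mathscr{P}(\HH)$, and for each $x\in\HH$ the functional $C\mapsto\scal{Cx}{x}_{\HH}$ is norm-continuous, whence $\scal{Cx}{x}_{\HH}=\lim_{\gamma\to\pinf}\scal{C_{\gamma}x}{x}_{\HH}\ge 0$ for $C=L^*\pushfwd(\lambda A+(1-\lambda)B)-\lambda(L^*\pushfwd A)-(1-\lambda)(L^*\pushfwd B)$. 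Thus $C\in\mathscr{P}(\HH)$, which is exactly the asserted inequality. I do not anticipate a genuine obstacle; the only point that must be checked is that all three convergences take place in operator norm, so that one may pass to the limit inside the quadratic forms, and this is precisely what the estimates in the proof of Theorem~\ref{t:53}\ref{t:53ii} provide.
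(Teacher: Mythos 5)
Your argument is correct and follows essentially the same route as the paper: apply the concavity of $R_{\gamma}\colon A\mapsto L\proxc{\gamma}A$ from Proposition~\ref{p:29}\ref{p:29ivc} for each $\gamma$, then let $\gamma\to\pinf$ via Theorem~\ref{t:53}\ref{t:53ii}. Your explicit verification that the L\"owner inequality survives the norm limit is exactly the (implicit) final step of the paper's proof.
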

\begin{proof}
By Proposition~\ref{p:29}\ref{p:29ivc},
$R_{\gamma}\colon\EuScript{S}(\GG)\to\EuScript{S}(\HH)\colon
A\mapsto L\proxc{\gamma}A$ is concave, i.e.,
\begin{equation}
\label{e:c100b}
\brk1{\forall\lambda\in\zeroun}
\brk1{\forall A\in\EuScript{S}(\GG)}
\brk1{\forall B\in\EuScript{S}(\GG)}\quad
\lambda(L\proxc{\gamma}A)+(1-\lambda)(L\proxc{\gamma}B)\preccurlyeq
L^*\proxc{\gamma}\brk1{\lambda A+(1-\lambda)B}.
\end{equation} 
Hence, letting $\gamma\to\pinf$ in \eqref{e:c100b} and
invoking Theorem~\ref{t:53}\ref{t:53ii} together with
Lemma~\ref{l:1}\ref{l:1v}, we obtain \eqref{e:c100a}.
\end{proof}

\begin{remark}
\label{r:100}
In the context of the resolvent averages of Example~\ref{ex:rav},
Theorem~\ref{t:53} and Corollary~\ref{c:100} generalize
\cite[Theorem~4.2 and Corollary~4.6]{Baus10}, which were
established in the finite-dimensional context using different
techniques.
\end{remark}

\begin{corollary}
\label{c:54}
Suppose that $L\in\BL(\HH,\GG)$ is an isometry, and let
$B\in\EuScript{S}(\GG)$. Then the
following hold:
\begin{enumerate}
\item
\label{c:54i}
$(\forall\gamma\in\RPP)$ $L^*\pushfwd B\preccurlyeq
L\proxcc{\gamma}B\preccurlyeq L^*\circ B\circ L$.
\item
\label{c:54ii}
$L\proxcc{\gamma}B\to L^*\circ B\circ L$ as $0<\gamma\to 0$.
\item
\label{c:54iii}
$L\proxcc{\gamma}B\to L^*\pushfwd B$ as $\gamma\to\pinf$.
\end{enumerate}
\end{corollary}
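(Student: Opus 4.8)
The key observation is that when $L$ is an isometry we have $L^*\circ L=\Id_{\HH}$, hence in particular $\|L\|=1$ (so $0<\|L\|\leq 1$), and $L$ is bounded below (with constant $\beta=1$, since $\|Lx\|_{\GG}^2=\scal{x}{L^*Lx}_{\HH}=\|x\|_{\HH}^2$). Therefore all hypotheses of Proposition~\ref{p:30}, Corollary~\ref{c:31}, and Theorem~\ref{t:53} are met, and we may freely use them. The plan is simply to specialize those three results and to fold in Lemma~\ref{l:5}, which tells us that $L\proxc{\gamma}B=L\proxcc{\gamma}B$ in the isometry case, so that every statement about $L\proxc{\gamma}$ becomes a statement about $L\proxcc{\gamma}$.

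For \ref{c:54i}: The right-hand inequality $L\proxcc{\gamma}B\preccurlyeq L^*\circ B\circ L$ is exactly the upper bound from Proposition~\ref{p:30}\ref{p:30i}. For the left-hand inequality, I would invoke Corollary~\ref{c:31}\ref{c:31i}, which gives $L^*\pushfwd B\preccurlyeq L\proxc{\gamma}B$; then Lemma~\ref{l:5} rewrites the right side as $L\proxcc{\gamma}B$, yielding $L^*\pushfwd B\preccurlyeq L\proxcc{\gamma}B$. Chaining the two gives the asserted sandwich, valid for every $\gamma\in\RPP$.

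For \ref{c:54ii}: This is immediate from Theorem~\ref{t:53}\ref{t:53i}, whose hypotheses ($0<\|L\|\leq 1$, $B\in\mathscr{S}(\GG)$) are satisfied here, giving $L\proxcc{\gamma}B\to L^*\circ B\circ L$ as $0<\gamma\to 0$. For \ref{c:54iii}: By Theorem~\ref{t:53}\ref{t:53ii} (applicable since $L$ is bounded below), $L\proxc{\gamma}B\to L^*\pushfwd B$ as $\gamma\to\pinf$; applying Lemma~\ref{l:5} to replace $L\proxc{\gamma}B$ by $L\proxcc{\gamma}B$ completes the argument.

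There is no real obstacle here: the corollary is purely a matter of checking that ``$L$ isometry'' implies the two running hypotheses ``$0<\|L\|\leq 1$'' and ``$L$ bounded below,'' and then quoting the earlier results together with Lemma~\ref{l:5}. The only point requiring a sentence of justification is that an isometry is bounded below, which follows from $\|Lx\|_{\GG}=\|x\|_{\HH}$ for all $x\in\HH$.
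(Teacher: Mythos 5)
Your proposal is correct and follows essentially the same route as the paper's own proof: invoke Lemma~\ref{l:5} to identify $L\proxc{\gamma}B$ with $L\proxcc{\gamma}B$, then quote Proposition~\ref{p:30}\ref{p:30i} and Corollary~\ref{c:31}\ref{c:31i} for the order relations and Theorem~\ref{t:53} for the two limits. Your explicit check that an isometry is bounded below with $\|L\|=1$ is a small addition the paper leaves implicit, but the argument is the same.
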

\begin{proof}
Since $L$ is an isometry, Lemma~\ref{l:5} yields
$L\proxc{\gamma}B=L\proxcc{\gamma}B$.

\ref{c:54i}: This follows from Proposition~\ref{p:30}\ref{p:30i}
and Corollary~\ref{c:31}\ref{c:31i}.

\ref{c:54ii}: This follows from Theorem~\ref{t:53}\ref{t:53i}.

\ref{c:54iii}: This follows from Theorem~\ref{t:53}\ref{t:53ii}.
\end{proof}

\begin{corollary}[resolvent mixtures]
\label{c:35}
Consider the setting of Example~\ref{ex:comix}. Then the following
hold:
\begin{enumerate}
\item
\label{c:35i}
$\Rcm{\gamma}(L_k,B_k)_{1\leq k\leq p}\preccurlyeq
\sum_{k=1}^p\alpha_kL_k^*\circ B_k\circ L_k$.
\item
\label{c:35ii}
$\Rcm{\gamma}(L_k,B_k)_{1\leq k\leq p}\to
\sum_{k=1}^p\alpha_kL_k^*\circ B_k\circ L_k$ as
$0<\gamma\to 0$. 
\item 
\label{c:35iii}
Suppose that $L_j$ is bounded below for some $j\in\{1,\ldots,p\}$.
Then the following are satisfied:
\begin{enumerate}
\item
\label{c:35iiia}
$\Rm{\gamma}(L_k,B_k)_{1\leq k\leq p}\in\EuScript{S}(\HH)$
and
$\Rcm{\gamma}(L_k,B_k)_{1\leq k\leq p}\in\EuScript{S}(\HH)$.
\item
\label{c:35iiib}
$\brk1{\sum_{k=1}^p\alpha_kL_k^*\circ B_k^{-1}\circ L_k}^{-1}
\preccurlyeq\Rm{\gamma}(L_k,B_k)_{1\leq k\leq p}$. 
\item
\label{c:35iiic}
$\Rm{\gamma}(L_k,B_k)_{1\leq k\leq p}\to
\brk1{\sum_{k=1}^p\alpha_kL_k^*\circ B_k^{-1}\circ L_k}^{-1}$
as $\gamma\to\pinf$. 
\end{enumerate}
\end{enumerate}
\end{corollary}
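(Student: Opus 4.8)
The plan is to derive every item by specializing the general results of Sections~\ref{sec:2} and \ref{sec:3} to the pair $(L,B)$ constructed in Example~\ref{ex:comix}; the only genuine work is to make the operators $L^{*}\circ B\circ L$ and $L^{*}\pushfwd B$ explicit and to check the standing hypotheses on $L$ and $B$.

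First I would compute the adjoint: pairing $Lx=(\sqrt{\alpha_{k}}L_{k}x)_{1\le k\le p}$ with an arbitrary $(y_{k})_{1\le k\le p}\in\GG=\bigoplus_{k=1}^{p}\GG_{k}$ gives $L^{*}\colon(y_{k})_{1\le k\le p}\mapsto\sum_{k=1}^{p}\sqrt{\alpha_{k}}\,L_{k}^{*}y_{k}$. Hence $L^{*}\circ B\circ L=\sum_{k=1}^{p}\alpha_{k}\,L_{k}^{*}\circ B_{k}\circ L_{k}$, and, since $B^{-1}\colon(y_{k})_{1\le k\le p}\mapsto(B_{k}^{-1}y_{k})_{1\le k\le p}$, the same computation yields $L^{*}\circ B^{-1}\circ L=\sum_{k=1}^{p}\alpha_{k}\,L_{k}^{*}\circ B_{k}^{-1}\circ L_{k}$, so that \eqref{e:parall} gives $L^{*}\pushfwd B=\brk1{\sum_{k=1}^{p}\alpha_{k}\,L_{k}^{*}\circ B_{k}^{-1}\circ L_{k}}^{-1}$. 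For the hypotheses, $\|Lx\|_{\GG}^{2}=\sum_{k=1}^{p}\alpha_{k}\|L_{k}x\|_{\GG_{k}}^{2}\le\brk1{\sum_{k=1}^{p}\alpha_{k}\|L_{k}\|^{2}}\|x\|_{\HH}^{2}\le\|x\|_{\HH}^{2}$ shows $\|L\|\le 1$, while $\|L\|>0$ because the $L_{k}$ do not all vanish; and each $B_{k}\in\mathscr{S}(\GG_{k})$ carries a constant $\beta_{k}\in\RPP$ with $\beta_{k}\Id_{\GG_{k}}\preccurlyeq B_{k}$, whence $(\min_{1\le k\le p}\beta_{k})\Id_{\GG}\preccurlyeq B$, i.e.\ $B\in\mathscr{S}(\GG)$. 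Recalling from Example~\ref{ex:comix} that $L\proxcc{\gamma}B=\Rcm{\gamma}(L_{k},B_{k})_{1\le k\le p}$ and $L\proxc{\gamma}B=\Rm{\gamma}(L_{k},B_{k})_{1\le k\le p}$, items \ref{c:35i} and \ref{c:35ii} follow at once from the upper bound in Proposition~\ref{p:30}\ref{p:30i} and from Theorem~\ref{t:53}\ref{t:53i}, respectively.

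For \ref{c:35iii} I would first upgrade the hypothesis to a statement about $L$: if $L_{j}$ is bounded below with constant $\beta\in\RPP$, then $\|Lx\|_{\GG}^{2}\ge\alpha_{j}\|L_{j}x\|_{\GG_{j}}^{2}\ge\alpha_{j}\beta^{2}\|x\|_{\HH}^{2}$ for every $x\in\HH$, so $L$ is bounded below. With $L$ bounded below and $\|L\|\le 1$, \ref{c:35iiia} is Proposition~\ref{p:29}\ref{p:29iva}, \ref{c:35iiib} is the left-hand inequality in Corollary~\ref{c:31}\ref{c:31i}, and \ref{c:35iiic} is Theorem~\ref{t:53}\ref{t:53ii}, each read through the identifications above.

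I do not expect a real obstacle: the analytic substance is entirely in the general theorems, and the rest is bookkeeping. The one point deserving a line rather than a bare citation---and the most natural place to overlook something---is that boundedness below of a single summand $L_{j}$ already forces $L$ to be bounded below (the remaining $L_{k}$ need not be), which is exactly what activates Proposition~\ref{p:29}\ref{p:29iva}, Corollary~\ref{c:31}\ref{c:31i}, and Theorem~\ref{t:53}\ref{t:53ii} in \ref{c:35iii}.
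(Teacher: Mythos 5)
Your proposal is correct and follows essentially the same route as the paper: identify $L^{*}\circ B\circ L$ and $L^{*}\pushfwd B$ with the weighted sums, observe that boundedness below of a single $L_{j}$ forces $L$ to be bounded below, and then cite Proposition~\ref{p:30}\ref{p:30i}, Theorem~\ref{t:53}, Proposition~\ref{p:29}\ref{p:29iva}, and Corollary~\ref{c:31}\ref{c:31i}. Your additional checks that $0<\|L\|\leq 1$ and $B\in\mathscr{S}(\GG)$ are harmless extra bookkeeping that the paper leaves implicit.
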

\begin{proof}
Note that $L^*\circ B\circ L=\sum_{k=1}^p\alpha_lL_k^*\circ B_k
\circ L_k$ and $L^*\pushfwd B=\brk1{\sum_{k=1}^p\alpha_kL_k^*\circ
B_k^{-1}\circ L_k}^{-1}$. Further, if $L_j$ is bounded below for
some $j\in\{1,\ldots,p\}$, then $L$ is also bounded below. Indeed,
there exists $\alpha\in\RPP$ such that $(\forall x\in\HH)$ 
$\alpha\|x\|_{\HH}\leq\|L_jx\|_{\GG_j}$. Thus, $L$ is bounded below
since
\begin{equation}
(\forall x\in\HH)\quad\|Lx\|_{\GG}
=\brk3{\sum_{k=1}^p\alpha_k\|L_kx\|_{\GG_k}^2}^{1/2}
\geq\brk2{\alpha_j\alpha^2\|x\|_{\HH}^2}^{1/2}
=\brk1{\alpha_j^{1/2}\alpha}\|x\|_{\HH}.
\end{equation}

\ref{c:35i}: This follows from Proposition~\ref{p:30}\ref{p:30i}.

\ref{c:35ii}: This follows from Theorem~\ref{t:53}\ref{t:53i}.

\ref{c:35iiia}: This follows from 
Proposition~\ref{p:29}\ref{p:29iva}.

\ref{c:35iiib}: This follows from Corollary~\ref{c:31}\ref{c:31i}.

\ref{c:35iiic}: This follows from Theorem~\ref{t:53}\ref{t:53ii}.
\end{proof}

\begin{corollary}
\label{c:37}
Consider the setting of Example~\ref{ex:rav}. Then the following
hold:
\begin{enumerate}
\item
\label{c:37i}
$\brk1{\sum_{k=1}^p\alpha_kB_k^{-1}}^{-1}\preccurlyeq
\rav_{\gamma}(B_k)_{1\leq k\leq p}\preccurlyeq
\sum_{k=1}^p\alpha_kB_k$.
\item
\label{c:37ii}
$\rav_{\gamma}(B_k)_{1\leq k\leq p}\to
\sum_{k=1}^p\alpha_kB_k$ as $0<\gamma\to 0$.
\item
\label{c:37iii}
$\rav_{\gamma}(B_k)_{1\leq k\leq p}\to
\brk1{\sum_{k=1}^p\alpha_kB_k^{-1}}^{-1}$ as $\gamma\to\pinf$.
\end{enumerate}
\end{corollary}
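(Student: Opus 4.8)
The plan is to deduce all three assertions from the general results on resolvent mixtures already established in Section~\ref{sec:3}, by specializing to the setting of Example~\ref{ex:rav}. Recall from that example that, taking $\GG_k=\HH$ and $L_k=\Id_\HH$ for every $k\in\{1,\ldots,p\}$, the operator $L\colon\HH\to\GG=\bigoplus_{k=1}^p\HH\colon x\mapsto(\sqrt{\alpha_k}\,x)_{1\le k\le p}$ is an isometry (indeed $L^*\circ L=\sum_{k=1}^p\alpha_k\Id_\HH=\Id_\HH$), that $B\colon(y_k)_{1\le k\le p}\mapsto(B_ky_k)_{1\le k\le p}$ lies in $\mathscr{S}(\GG)$ (it is self-adjoint and satisfies $(\min_{1\le k\le p}\varepsilon_k)\Id_\GG\preccurlyeq B$ whenever $\varepsilon_k\Id_\HH\preccurlyeq B_k$ for all $k$), and that
\[
L^*\circ B\circ L=\sum_{k=1}^p\alpha_kB_k,\qquad
L^*\pushfwd B=\Bigl(\sum_{k=1}^p\alpha_kB_k^{-1}\Bigr)^{-1},\qquad
L\proxcc{\gamma}B=\rav_{\gamma}(B_k)_{1\le k\le p}.
\]

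With these identifications in hand, \ref{c:37i} is precisely Corollary~\ref{c:54}\ref{c:54i} (the chain $L^*\pushfwd B\preccurlyeq L\proxcc{\gamma}B\preccurlyeq L^*\circ B\circ L$), \ref{c:37ii} is Corollary~\ref{c:54}\ref{c:54ii} (convergence to $L^*\circ B\circ L$ as $0<\gamma\to0$), and \ref{c:37iii} is Corollary~\ref{c:54}\ref{c:54iii} (convergence to $L^*\pushfwd B$ as $\gamma\to\pinf$). Equivalently, one can invoke Corollary~\ref{c:35}: part \ref{c:35i} yields the upper bound and part \ref{c:35iiib} (applicable since $L_1=\Id_\HH$ is bounded below) the lower bound in \ref{c:37i}, while parts \ref{c:35ii} and \ref{c:35iiic} give \ref{c:37ii} and \ref{c:37iii}, after recalling from Example~\ref{ex:rav} that $\Rm{\gamma}(\Id_k,B_k)_{1\le k\le p}=\Rcm{\gamma}(\Id_k,B_k)_{1\le k\le p}=\rav_{\gamma}(B_k)_{1\le k\le p}$.

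There is no real obstacle here: the corollary is essentially the dictionary translating the operator-composition statements of Section~\ref{sec:3} back into the classical language of arithmetic, harmonic, and resolvent averages, and in particular recovers the interpolation property of $\rav_\gamma$ between the arithmetic average ($0<\gamma\to0$) and the harmonic average ($\gamma\to\pinf$) originally established in \cite{Baus10}. The only step that is not purely formal is the identity $L\proxcc{\gamma}B=\rav_{\gamma}(B_k)_{1\le k\le p}$ for the isometry $L$ above — that is, the verification that the definition \eqref{e:rav} of the resolvent average agrees with the resolvent cocomposition — but this has already been carried out in Example~\ref{ex:rav}, so the three claims follow at once.
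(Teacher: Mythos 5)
Your proposal is correct and follows essentially the same route as the paper: the paper deduces all three items from Corollary~\ref{c:35} after recalling the identification $\rav_{\gamma}(B_k)_{1\leq k\leq p}=\Rm{\gamma}(L_k,B_k)_{1\leq k\leq p}=\Rcm{\gamma}(L_k,B_k)_{1\leq k\leq p}$ from Example~\ref{ex:rav}, exactly as in the second half of your argument. Your alternative appeal to Corollary~\ref{c:54} via the isometry $L$ is equally valid but rests on the same underlying results (Proposition~\ref{p:30}, Corollary~\ref{c:31}, Theorem~\ref{t:53}), so it is not a genuinely different proof.
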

\begin{proof}
Recall that $\rav_{\gamma}(B_k)_{1\leq k\leq p}
=\Rm{\gamma}(L_k,B_k)_{1\leq k\leq p}
=\Rcm{\gamma}(L_k,B_k)_{1\leq k\leq p}$.

\ref{c:37i}: This follows from items \ref{c:35i} and \ref{c:35iiib}
in Corollary~\ref{c:35}.

\ref{c:37ii}: This follows from Corollary~\ref{c:35}\ref{c:35ii}.

\ref{c:37iii}: This follows from
Corollary~\ref{c:35}\ref{c:35iiic}.
\end{proof}

\section{Nonexpansiveness of resolvent compositions}
\label{sec:4}

In this section, we build on the results of Section~\ref{sec:3} to 
prove that the resolvent composition operations are nonexpansive
with respect to the Thompson metric \cite{Thomp63} on 
$\EuScript{S}(\HH)$, defined by
\begin{equation}
\label{e:thomp}
\brk1{\forall A\in\EuScript{S}(\HH)}
\brk1{\forall B\in\EuScript{S}(\HH)}\quad
d_{T}^{\HH}(A,B)=\ln\brk1{\max\{g(A,B),g(B,A)\}},
\end{equation}
where $g(A,B)
=\inf\menge{\lambda\in\RPP}{A\preccurlyeq\lambda B}$.\\
The Thompson metric was originally defined on cones in Banach
spaces \cite{Thomp63}.
Since $\EuScript{S}(\HH)$ is contained in the cone of
monotone self-adjoint operators on $\BL(\HH)$, which
is closed and hence complete in the operator norm topology, and
since every $A\in\EuScript{S}(\HH)$
satisfies $\alpha\Id_{\HH}\preccurlyeq
A\preccurlyeq\|A\|\Id_{\HH}$ for some $\alpha\in\RPP$, it follows
from \cite[Lemma~3]{Thomp63} that
$(\EuScript{S}(\HH),d_{T}^{\HH})$ is a complete metric space.
The metric $d_{T}^{\HH}$ provides a geometric structure on
$\EuScript{S}(\HH)$ that plays a central role in the study of
nonlinear matrix equations, especially for establishing existence
and uniqueness results via Banach contraction mappings
\cite{Lee08,Lemm15,Lim09,Lim12}, and in various applications
to nonlinear optimization \cite{Gaub14,Laws07,Mont98}. 
In this context, the nonexpansiveness of resolvent compositions is
crucial, as it ensures that the resulting operations preserve both
the metric structure and the stability necessary for analysis. For
instance, in Section~\ref{sec:5}, we present two
nonlinear equations based on resolvent compositions that admit
unique solutions.

\begin{theorem}
\label{t:thomp}
Suppose that $L\in\BL(\HH,\GG)$ is bounded below and satisfies
$\|L\|\leq 1$, and let $\gamma\in\RPP$. Then the following hold:
\begin{enumerate}
\item
\label{t:thompi}
$T_{\gamma}\colon\brk1{\EuScript{S}(\GG),d_{T}^{\GG}}
\to\brk1{\EuScript{S}(\HH),d_{T}^{\HH}}\colon B\mapsto 
L\proxcc{\gamma}B$ is nonexpansive, i.e.,
\begin{equation}
\brk1{\forall A\in\EuScript{S}(\GG)}
\brk1{\forall B\in\EuScript{S}(\GG)}\quad
d_{T}^{\HH}\brk1{L\proxcc{\gamma}A,L\proxcc{\gamma}B}\leq
d_{T}^{\GG}(A,B).
\end{equation}
\item
\label{t:thompii}
$R_{\gamma}\colon\brk1{\EuScript{S}(\GG),d_{T}^{\GG}}
\to\brk1{\EuScript{S}(\HH),d_{T}^{\HH}}\colon B\mapsto 
L\proxc{\gamma}B$ is nonexpansive, i.e.,
\begin{equation}
\brk1{\forall A\in\EuScript{S}(\GG)}
\brk1{\forall B\in\EuScript{S}(\GG)}\quad
d_{T}^{\HH}\brk1{L\proxc{\gamma}A,L\proxc{\gamma}B}\leq
d_{T}^{\GG}(A,B).
\end{equation}
\end{enumerate}
\end{theorem}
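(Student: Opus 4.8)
The plan is to reduce both assertions to a single ``order‑plus‑positive‑homogeneity'' estimate for the resolvent cocomposition, and then read the conclusion off directly from the definition of the Thompson metric through the gauge $g$ appearing in \eqref{e:thomp}. The estimate I will prove first is: \emph{if $\lambda\in\RPP$ with $\lambda\ge 1$ and $A,B\in\mathscr{S}(\GG)$ satisfy $A\preccurlyeq\lambda B$, then $L\proxcc{\gamma}A\preccurlyeq\lambda(L\proxcc{\gamma}B)$.} To see this, set $\Psi=\Id_{\GG}-L\circ L^*$, which lies in $\mathscr{P}(\GG)$ since $\|L\|\le 1$. From $A\preccurlyeq\lambda B$ and Lemma~\ref{l:1}\ref{l:1ii} we get $\lambda^{-1}B^{-1}\preccurlyeq A^{-1}$; adding $\gamma\Psi$ and using that $\lambda\ge 1$ and $\Psi\succcurlyeq 0$ force $\lambda^{-1}\gamma\Psi\preccurlyeq\gamma\Psi$ gives
\[
\lambda^{-1}(B^{-1}+\gamma\Psi)\preccurlyeq A^{-1}+\gamma\Psi .
\]
All operators here belong to $\mathscr{S}(\GG)$, so a second use of Lemma~\ref{l:1}\ref{l:1ii} yields $(A^{-1}+\gamma\Psi)^{-1}\preccurlyeq\lambda(B^{-1}+\gamma\Psi)^{-1}$; conjugating by $L$ via Lemma~\ref{l:1}\ref{l:1iii} and invoking the identity in Lemma~\ref{l:4} produces the claimed estimate.

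For \ref{t:thompi}, fix $A,B\in\mathscr{S}(\GG)$; by Proposition~\ref{p:29}\ref{p:29iva}, $L\proxcc{\gamma}A$ and $L\proxcc{\gamma}B$ lie in $\mathscr{S}(\HH)$, so $d_{\HH}(L\proxcc{\gamma}A,L\proxcc{\gamma}B)$ is well defined. From $A\preccurlyeq g(A,B)\,B$ and $B\preccurlyeq g(B,A)\,A$ (or an $\varepsilon$‑approximation of these, to avoid discussing attainment of the infima) one obtains $g(A,B)\,g(B,A)\ge 1$, hence $\lambda:=\max\{g(A,B),g(B,A)\}\ge 1$, while $A\preccurlyeq\lambda B$ and $B\preccurlyeq\lambda A$. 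Applying the estimate above both ways gives $L\proxcc{\gamma}A\preccurlyeq\lambda(L\proxcc{\gamma}B)$ and $L\proxcc{\gamma}B\preccurlyeq\lambda(L\proxcc{\gamma}A)$, i.e.\ $g(L\proxcc{\gamma}A,L\proxcc{\gamma}B)\le\lambda$ and $g(L\proxcc{\gamma}B,L\proxcc{\gamma}A)\le\lambda$; taking the logarithm of the maximum of these two numbers yields $d_{\HH}(L\proxcc{\gamma}A,L\proxcc{\gamma}B)\le\ln\lambda=d_{\GG}(A,B)$.

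For \ref{t:thompii}, the idea is to transport the previous item through inversion. Rearranging \eqref{e:rescoco} (and using Proposition~\ref{p:29}\ref{p:29iva} for invertibility) gives $L\proxc{\gamma}B=(L\proxcc{1/\gamma}B^{-1})^{-1}$ for every $B\in\mathscr{S}(\GG)$. Moreover, Lemma~\ref{l:1}\ref{l:1ii} shows that, for $X,Y\in\mathscr{S}(\HH)$, one has $X^{-1}\preccurlyeq\mu Y^{-1}\Leftrightarrow Y\preccurlyeq\mu X$, whence $g(X^{-1},Y^{-1})=g(Y,X)$ and therefore $d_{\HH}(X^{-1},Y^{-1})=d_{\HH}(X,Y)$; the same holds on $\GG$. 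Combining these facts with \ref{t:thompi} applied to $A^{-1},B^{-1}\in\mathscr{S}(\GG)$ and parameter $1/\gamma$,
\begin{align*}
d_{\HH}(L\proxc{\gamma}A,L\proxc{\gamma}B)
&=d_{\HH}\bigl((L\proxcc{1/\gamma}A^{-1})^{-1},(L\proxcc{1/\gamma}B^{-1})^{-1}\bigr)\\
&=d_{\HH}(L\proxcc{1/\gamma}A^{-1},L\proxcc{1/\gamma}B^{-1})\\
&\le d_{\GG}(A^{-1},B^{-1})=d_{\GG}(A,B).
\end{align*}

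The substantive point is the order‑and‑scaling estimate, whose only delicate feature is the hypothesis $\lambda\ge 1$: for $\lambda<1$ the inequality $\lambda^{-1}\gamma\Psi\preccurlyeq\gamma\Psi$ reverses, which is exactly why the argument must be run with $\lambda=\max\{g(A,B),g(B,A)\}$ rather than with $g(A,B)$ alone. Everything else — verifying membership in $\mathscr{S}(\HH)$ so that inverses and Thompson distances are meaningful, and the inversion‑invariance of $d$ — is routine bookkeeping with the lemmas already available.
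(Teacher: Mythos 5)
Your proof is correct, and it follows the same overall strategy as the paper (transfer the gauge bound $A\preccurlyeq\lambda B$ through the cocomposition with the same factor $\lambda$, conclude via the definition \eqref{e:thomp}, and obtain \ref{t:thompii} from \ref{t:thompi} by inversion), but the way you obtain the key estimate differs. The paper assembles it from three ingredients: order preservation (Proposition~\ref{p:30}\ref{p:30ii}), the positive-homogeneity identity $L\proxcc{\gamma}(\rho B)=\rho\brk1{L\proxcc{\gamma\rho}B}$ quoted from \cite[Proposition~3.1(vi)]{Svva25}, and monotonicity in the parameter (Proposition~\ref{p:30}\ref{p:30iii}), the last step being where $\rho\geq 1$ enters. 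You instead prove the combined statement ($A\preccurlyeq\lambda B$ with $\lambda\geq 1$ implies $L\proxcc{\gamma}A\preccurlyeq\lambda\,L\proxcc{\gamma}B$) in one stroke directly from the representation $L\proxcc{\gamma}B=L^*\circ(B^{-1}+\gamma\Psi)^{-1}\circ L$ of Lemma~\ref{l:4}, using only Lemma~\ref{l:1}\ref{l:1ii}--\ref{l:1iii}; this makes the argument self-contained (no appeal to the external scaling identity) and isolates exactly where $\lambda\geq 1$ is used, namely $\lambda^{-1}\gamma\Psi\preccurlyeq\gamma\Psi$, which is the same mechanism hidden in the paper's use of \ref{p:30iii}. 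Your justification that $\lambda=\max\{g(A,B),g(B,A)\}\geq 1$ and your verification that $d$ is invariant under inversion (via $g(X^{-1},Y^{-1})=g(Y,X)$) are points the paper uses implicitly without proof, so spelling them out is a small bonus; both treatments are equally valid.
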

\begin{proof}
Let $A$ and $B$ be in $\EuScript{S}(\GG)$, and set
$g(A,B)=\inf\menge{\lambda\in\RPP}{A\preccurlyeq\lambda B}$.

\ref{t:thompi}:
Note that the operator $T_{\gamma}$ is well defined by
Proposition~\ref{p:29}\ref{p:29iva}. By virtue of \eqref{e:thomp},
\begin{equation}
\label{e:thomp2}
A\preccurlyeq e^{d_{T}^{\GG}(A,B)}B.
\end{equation}
On the other hand, it follows from 
\cite[Proposition~3.1(vi)]{Svva25} and
Proposition~\ref{p:30}\ref{p:30iii} that
\begin{equation}
\label{e:thomp3}
(\forall\rho\in\intv[r]{1}{\pinf})\quad
L\proxcc{\gamma}(\rho B)=\rho\brk1{L\proxcc{\gamma\rho}B}
\preccurlyeq\rho\brk1{L\proxcc{\gamma}B}.
\end{equation}
Since $e^{d_{T}^{\GG}(A,B)}\geq 1$, we combine
Proposition~\ref{p:30}\ref{p:30ii}, \eqref{e:thomp2}, and
\eqref{e:thomp3} to obtain
\begin{equation}
\label{e:thomp4}
L\proxcc{\gamma}A\preccurlyeq 
L\proxcc{\gamma}\brk1{e^{d_{T}^{\GG}(A,B)}B}
\preccurlyeq e^{d_{T}^{\GG}(A,B)}\brk1{L\proxcc{\gamma}B}.
\end{equation}
In turn, 
\begin{equation}
\label{e:thomp5}
g\brk1{L\proxcc{\gamma}A,L\proxcc{\gamma}B}
=\inf\menge{\lambda\in\RPP}{L\proxcc{\gamma}A\preccurlyeq
\lambda(L\proxcc{\gamma}B)}\leq e^{d_{T}^{\GG}(A,B)}.
\end{equation}
By the same argument,
\begin{equation}
\label{e:thomp6}
g\brk1{L\proxcc{\gamma}B,L\proxcc{\gamma}A}\leq
e^{d_{T}^{\GG}(A,B)}.
\end{equation}
Altogether, it follows from \eqref{e:thomp}, \eqref{e:thomp5}, and
\eqref{e:thomp6} that
\begin{equation}
d_{T}^{\HH}\brk1{L\proxcc{\gamma}A,L\proxcc{\gamma}B}
=\max\bigl\{\,\ln g\brk1{L\proxcc{\gamma}A,L\proxcc{\gamma}B}\,,\,
\ln g\brk1{L\proxcc{\gamma}B,L\proxcc{\gamma}A}\,\bigr\}
\leq d_{T}^{\GG}(A,B).
\end{equation}

\ref{t:thompii}: 
Note that $R_{\gamma}$ is well defined by
Proposition~\ref{p:29}\ref{p:29iva}. Since
$d_{T}^{\GG}(A,B)=d_{T}^{\GG}(A^{-1},B^{-1})$, we deduce from
\ref{t:thompi} and \eqref{e:rescoco} that
\begin{equation}
d_{T}^{\HH}\brk1{L\proxc{\gamma}A,L\proxc{\gamma}B}=
d_{T}^{\HH}\brk1{L\proxcc{1/\gamma}A^{-1},L\proxcc{1/\gamma}B^{-1}}
\leq d_{T}^{\GG}(A^{-1},B^{-1})=d_{T}^{\GG}(A,B),
\end{equation}
as announced.
\end{proof}

\begin{corollary}
\label{c:thomp}
Consider the setting of Example~\ref{ex:comix}. Suppose that
$L_j$ is bounded below for some $j\in\{1,\ldots,p\}$ and that, for
every $k\in\{1,\ldots,p\}$, $A_k\in\EuScript{S}(\GG_k)$, and set
$A\colon\GG\to\GG\colon(y_k)_{1\leq k\leq p}\mapsto
(A_ky_k)_{1\leq k\leq p}$. Then
\begin{equation}
d_{T}^{\HH}\brk2{\Rm{\gamma}(L_k,A_k)_{1\leq k\leq p},
\Rm{\gamma}(L_k,B_k)_{1\leq k\leq p}}\leq d_{T}^{\GG}(A,B)
=\max_{1\leq k\leq p}d_{\GG_k}(A_k,B_k)
\end{equation}
and
\begin{equation}
d_{T}^{\HH}\brk2{\Rcm{\gamma}(L_k,A_k)_{1\leq k\leq p},
\Rcm{\gamma}(L_k,B_k)_{1\leq k\leq p}}\leq d_{T}^{\GG}(A,B)
=\max_{1\leq k\leq p}d_{\GG_k}(A_k,B_k).
\end{equation}
In other words, the resolvent mixtures are nonexpansive for the
Thompson metric.
\end{corollary}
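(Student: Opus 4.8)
The plan is to derive Corollary~\ref{c:thomp} directly from Theorem~\ref{t:thomp}; the only additional work is to check that $L$ meets the hypotheses of that theorem and to compute the Thompson distance $d_{\GG}(A,B)$ in terms of the blocks $(A_k)$ and $(B_k)$.

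First I would verify the standing assumptions on $L$ and on $A,B$. Since $\sum_{k=1}^p\alpha_k=1$ and $\|L_k\|\leq 1$ for every $k$, for each $x\in\HH$ we have $\|Lx\|_{\GG}^2=\sum_{k=1}^p\alpha_k\|L_kx\|_{\GG_k}^2\leq\sum_{k=1}^p\alpha_k\|x\|_{\HH}^2=\|x\|_{\HH}^2$, hence $\|L\|\leq 1$; and, exactly as in the proof of Corollary~\ref{c:35}, the assumption that $L_j$ is bounded below for some $j$ forces $L$ to be bounded below. Moreover, each $A_k$ and each $B_k$ lies in $\mathscr{S}(\GG_k)$, so choosing a common positive lower bound over the finitely many blocks shows $A\in\mathscr{S}(\GG)$ and $B\in\mathscr{S}(\GG)$. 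Thus $L$, $A$, and $B$ satisfy all the hypotheses of Theorem~\ref{t:thomp}.

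Next I would compute $d_{\GG}(A,B)$. Because $A$ and $B$ are block-diagonal on the Hilbert direct sum $\GG=\bigoplus_{k=1}^p\GG_k$, for any $\lambda\in\RPP$ one has $A\preccurlyeq\lambda B$ if and only if $A_k\preccurlyeq\lambda B_k$ for every $k\in\{1,\ldots,p\}$. For fixed $k$, closedness of $\mathscr{P}(\GG_k)$ together with strict positivity of $B_k$ shows that $\menge{\lambda\in\RPP}{A_k\preccurlyeq\lambda B_k}=\intv[r]{g(A_k,B_k)}{\pinf}$; intersecting these rays over $k$ and taking the infimum gives $g(A,B)=\max_{1\leq k\leq p}g(A_k,B_k)$, and symmetrically $g(B,A)=\max_{1\leq k\leq p}g(B_k,A_k)$. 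Consequently, by \eqref{e:thomp} and monotonicity of the logarithm,
\[
d_{\GG}(A,B)=\ln\Bigl(\max_{1\leq k\leq p}\max\{g(A_k,B_k),g(B_k,A_k)\}\Bigr)=\max_{1\leq k\leq p}d_{\GG_k}(A_k,B_k).
\]

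Finally, I would invoke Theorem~\ref{t:thomp}, which gives $d_{\HH}(L\proxcc{\gamma}A,L\proxcc{\gamma}B)\leq d_{\GG}(A,B)$ and $d_{\HH}(L\proxc{\gamma}A,L\proxc{\gamma}B)\leq d_{\GG}(A,B)$, and rewrite the left-hand sides via the identifications $L\proxc{\gamma}A=\Rm{\gamma}(L_k,A_k)_{1\leq k\leq p}$ and $L\proxcc{\gamma}A=\Rcm{\gamma}(L_k,A_k)_{1\leq k\leq p}$ (and likewise for $B$) from Example~\ref{ex:comix}; combined with the formula for $d_{\GG}(A,B)$ above, this yields the two displayed estimates. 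There is no genuine obstacle here: the argument is essentially bookkeeping plus the elementary fact that the Thompson metric on an orthogonal sum, evaluated at block-diagonal strictly positive operators, equals the maximum of the blockwise Thompson distances. The one place that deserves a line of care is the half-ray identity $\menge{\lambda\in\RPP}{A_k\preccurlyeq\lambda B_k}=\intv[r]{g(A_k,B_k)}{\pinf}$, which uses attainment of the infimum, i.e.\ $A_k\preccurlyeq g(A_k,B_k)B_k$ (from closedness of $\mathscr{P}(\GG_k)$), together with $0\preccurlyeq B_k$ to get the ``upward'' closedness of the set.
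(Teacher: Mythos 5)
Your proposal is correct and follows essentially the same route as the paper: identify $L\proxc{\gamma}A=\Rm{\gamma}(L_k,A_k)_{1\leq k\leq p}$ and $L\proxcc{\gamma}A=\Rcm{\gamma}(L_k,A_k)_{1\leq k\leq p}$, note $d_{\GG}(A,B)=\max_{1\leq k\leq p}d_{\GG_k}(A_k,B_k)$, and apply Theorem~\ref{t:thomp}. You merely spell out the details the paper leaves as ``straightforward'' (the hypotheses on $L$, $A$, $B$ and the blockwise Thompson-metric identity), and those verifications are accurate.
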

\begin{proof}
It is straightforward to verify that 
$d_{T}^{\GG}(A,B)=\displaystyle\max_{1\leq k\leq p}
d_{\GG_k}(A_k,B_k)$. On the other hand, 
$L\proxc{\gamma}A=\Rm{\gamma}(L_k,A_k)_{1\leq k\leq p}$ and
$L\proxcc{\gamma}A=\Rcm{\gamma}(L_k,A_k)_{1\leq k\leq p}$.
Hence, the assertion follows from Theorem~\ref{t:thomp}.
\end{proof}

\begin{corollary}[\protect{\cite[Theorem~3.5]{Kum15}}]
Consider the setting of Example~\ref{ex:rav}.
Suppose that, for every $k\in\{1,\ldots,p\}$,
$A_k\in\EuScript{S}(\HH)$, and set $A\colon\GG\to\GG\colon
(y_k)_{1\leq k\leq p}\mapsto(A_ky_k)_{1\leq k\leq p}$. Then
\begin{equation}
d_{T}^{\HH}\brk1{\rav_{\gamma}(A_k)_{1\leq k\leq p},
\rav_{\gamma}(B_k)_{1\leq k\leq p}}\leq d_{T}^{\GG}(A,B).
\end{equation}
In other words, the resolvent average is nonexpansive for the
Thompson metric.
\end{corollary}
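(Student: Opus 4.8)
The plan is to obtain this statement as an immediate specialization of Corollary~\ref{c:thomp}. Recall from Example~\ref{ex:rav} that in the present setting --- where, for every $k\in\{1,\ldots,p\}$, $\GG_k=\HH$ and $L_k=\Id_{\HH}$ --- the resolvent mixture coincides with the resolvent average; that is, $\Rm{\gamma}(\Id_k,A_k)_{1\le k\le p}=\rav_{\gamma}(A_k)_{1\le k\le p}$, and likewise with $B_k$ in place of $A_k$.

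First I would verify that the hypotheses of Corollary~\ref{c:thomp} are met here: $\Id_{\HH}$ is an isometry, hence bounded below (take $\alpha=1$ in the definition of bounded below), so the requirement that $L_j$ be bounded below for some $j\in\{1,\ldots,p\}$ holds for every $j$. Then I would apply the $\Rm{\gamma}$ estimate of Corollary~\ref{c:thomp} directly, which yields
\begin{equation}
d_{\HH}\brk1{\rav_{\gamma}(A_k)_{1\le k\le p},\rav_{\gamma}(B_k)_{1\le k\le p}}
=d_{\HH}\brk2{\Rm{\gamma}(\Id_k,A_k)_{1\le k\le p},\Rm{\gamma}(\Id_k,B_k)_{1\le k\le p}}
\le d_{\GG}(A,B),
\end{equation}
where $A$ and $B$ are the block-diagonal operators on $\GG=\bigoplus_{k=1}^p\HH$ built from $(A_k)_{1\le k\le p}$ and $(B_k)_{1\le k\le p}$, respectively.

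There is essentially no obstacle: the substance has already been established in Theorem~\ref{t:thomp} and Corollary~\ref{c:thomp}, and the only thing to check is that the choice $L_k=\Id_{\HH}$ lies within the scope of Corollary~\ref{c:thomp}, which is immediate. One could equally well invoke Theorem~\ref{t:thomp} directly, applied to the isometry $L\colon x\mapsto(\sqrt{\alpha_k}\,x)_{1\le k\le p}$ and the block-diagonal operators $A$ and $B$ of Example~\ref{ex:rav}; this is precisely the route taken inside the proof of Corollary~\ref{c:thomp}, which also records the identity $d_{\GG}(A,B)=\max_{1\le k\le p}d_{\HH}(A_k,B_k)$.
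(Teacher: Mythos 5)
Your proposal is correct and matches the paper's own proof: the paper likewise observes that $\rav_{\gamma}(A_k)_{1\leq k\leq p}=\Rcm{\gamma}(\Id_{\HH},A_k)_{1\leq k\leq p}$ (which by Example~\ref{ex:rav} equals the resolvent mixture you invoke) and then appeals directly to Corollary~\ref{c:thomp}. Your additional check that $\Id_{\HH}$ is bounded below is a harmless elaboration of the same argument.
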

\begin{proof}
Since $\rav_{\gamma}(A_k)_{1\leq k\leq p}
=\Rcm{\gamma}(\Id_{\HH},A_k)_{1\leq k\leq p}$,
the conclusion follows from Corollary~\ref{c:thomp}.
\end{proof}

\section{Geometric means and nonlinear equations}
\label{sec:5}

Let $A\in\EuScript{S}(\GG)$. Since $A$ is strongly monotone, there
exists $\alpha\in\RPP$ such that $\alpha\Id_{\GG}\preccurlyeq A$.
Consequently, the spectrum $\sigma(A)$ is contained in the compact
interval $[\alpha,\|A\|]$
(see \cite[Theorem~VI.6 and Problem~VII.12]{RedSim}). 
Hence, for every $t\in\RR$, the function
$f_t\colon\sigma(A)\to\RR\colon\lambda\to\lambda^t$ is well defined
and continuous, and the operator $f_t(A)=A^t$ is therefore defined
according to the continuous functional calculus 
(see \cite[Section~VII]{RedSim}).

Given $A\in\EuScript{S}(\GG)$ and $B\in\EuScript{S}(\GG)$, an
important instance of Kubo-Ando's operator means \cite{Kubo80} is
the $t$-\emph{weighted geometric mean}
\cite{Ando87,Kum15,Laws14,Lim12} of $A$ and $B$, defined by 
\begin{equation}
\label{e:geomean}
(\forall t\in[0,1])\quad
A\#_tB
=A^{1/2}\circ\brk2{A^{-1/2}\circ B\circ A^{-1/2}}^t\circ A^{1/2}.
\end{equation}
From a geometric viewpoint, the curve $t\mapsto A\#_tB$ describes
a minimal geodesic between $A$ and $B$ with respect to the 
Thompson metric (see, e.g., \cite[Lemma~2.2(iv)]{Laws14}), in the
sense that
\begin{equation}
\label{e:52}
(\forall t\in[0,1])(\forall s\in[0,1])\quad
d_{T}^{\GG}(A\#_tB,A\#_sB)=\lvert t-s\rvert\,d_{T}^{\GG}(A,B).
\end{equation}
In particular, the geometric mean
$A\#B=A\#_{1/2}B$ is the metric midpoint of the arithmetic mean
\mbox{$(A+B)/2$} and the harmonic mean $2(A^{-1}+B^{-1})^{-1}$ for
the Thompson metric (see \cite{Cora94,Laws01}). 

The following result introduces a new interpolation between
$L^*\pushfwd B$ and $L^*\circ B\circ L$, which generalizes the
weighted $\mathscr{A}\#\mathscr{H}$--mean discussed in
Example~\ref{ex:ah}.

\begin{proposition}
\label{p:110}
Suppose that $L\in\BL(\HH,\GG)$ is an isometry, let
$B\in\EuScript{S}(\GG)$, and let $\gamma\in\RPP$. Define
\begin{equation}
\label{e:p110}
\mathcal{L}_{\gamma}(L,B)
=\brk1{L^*\circ(B+\gamma\Id_{\GG})\circ L}\#\brk1{L^*\pushfwd(B
+\gamma\Id_{\GG})}-\gamma\Id_{\HH}
\end{equation}
and 
\begin{equation}
\mathcal{L}_{-\gamma}(L,B)
=\brk2{\mathcal{L}_{\gamma}\brk1{L,B^{-1}}}^{-1}.
\end{equation}
Then the following hold:
\begin{enumerate}
\item
\label{p:110i}
$L^*\pushfwd B\preccurlyeq\mathcal{L}_{-\gamma}(L,B)\preccurlyeq
L\proxcc{\gamma}B\preccurlyeq\mathcal{L}_{1/\gamma}(L,B)
\preccurlyeq L^*\circ B\circ L$.
\item
\label{p:110ii}
$\mathcal{L}_{\gamma}(L,B)\to L^*\circ B\circ L$ as
$\gamma\to\pinf$.
\item
\label{p:110iii}
$\mathcal{L}_{\gamma}(L,B)\to L^*\pushfwd B$ as
$\gamma\to\minf$.
\end{enumerate}
\end{proposition}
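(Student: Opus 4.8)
\emph{Strategy.} The plan is to reduce the entire statement to a single master sandwich relation and then propagate it, once by order‑reversing inversion and once by squeezing against the asymptotics already recorded in Corollary~\ref{c:54}. A preliminary point to dispatch is well‑definedness: since $L$ is an isometry it is bounded below, so $B+\gamma\Id_{\GG}\in\mathscr{S}(\GG)$, $L^*\circ(B+\gamma\Id_{\GG})\circ L\in\mathscr{S}(\HH)$ and $L^*\pushfwd(B+\gamma\Id_{\GG})\in\mathscr{S}(\HH)$ by Lemma~\ref{l:3}\ref{l:3iii}; since the weighted geometric mean of two operators in $\mathscr{S}(\HH)$ lies again in $\mathscr{S}(\HH)$, all the means and inverses in \eqref{e:p110} make sense, and $\mathcal{L}_\gamma(L,B^{-1})\in\mathscr{S}(\HH)$ (which legitimizes the definition of $\mathcal{L}_{-\gamma}$) will follow a posteriori from the master relation.

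\emph{The master relation.} First I would simplify the two ingredients of $\mathcal{L}_\gamma(L,B)$ using $L^*\circ L=\Id_{\HH}$: namely $L^*\circ(B+\gamma\Id_{\GG})\circ L=(L^*\circ B\circ L)+\gamma\Id_{\HH}$, and, by \eqref{e:rescomp} applied with parameter $1/\gamma$ together with Lemma~\ref{l:5}, $L^*\pushfwd(B+\gamma\Id_{\GG})=(L\proxcc{1/\gamma}B)+\gamma\Id_{\HH}$. Set $F_1=(L^*\circ B\circ L)+\gamma\Id_{\HH}$ and $F_2=(L\proxcc{1/\gamma}B)+\gamma\Id_{\HH}$, both in $\mathscr{S}(\HH)$. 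By Corollary~\ref{c:54}\ref{c:54i}, $F_2\preccurlyeq F_1$, and the operator monotonicity of the geometric mean in each argument gives $F_2=F_2\#F_2\preccurlyeq F_1\#F_2\preccurlyeq F_1\#F_1=F_1$. Subtracting $\gamma\Id_{\HH}$ yields the master relation, which I will call $(\star)$: for all $\gamma\in\RPP$ and all $B\in\mathscr{S}(\GG)$, $\;L\proxcc{1/\gamma}B\preccurlyeq\mathcal{L}_\gamma(L,B)\preccurlyeq L^*\circ B\circ L$; in particular $\mathcal{L}_\gamma(L,B)\in\mathscr{S}(\HH)$.

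\emph{Part \ref{p:110i}.} Reading $(\star)$ with $\gamma$ replaced by $1/\gamma$ gives the right half, $L\proxcc{\gamma}B\preccurlyeq\mathcal{L}_{1/\gamma}(L,B)\preccurlyeq L^*\circ B\circ L$. For the left half, apply $(\star)$ to $B^{-1}$ to obtain $L\proxcc{1/\gamma}(B^{-1})\preccurlyeq\mathcal{L}_\gamma(L,B^{-1})\preccurlyeq L^*\circ B^{-1}\circ L$, and invert using Lemma~\ref{l:1}\ref{l:1ii}. The middle term inverts to $\mathcal{L}_{-\gamma}(L,B)$ by definition; $(L^*\circ B^{-1}\circ L)^{-1}=L^*\pushfwd B$ by \eqref{e:parall}; and $(L\proxcc{1/\gamma}(B^{-1}))^{-1}=(L\proxc{1/\gamma}(B^{-1}))^{-1}=L\proxcc{\gamma}B$ by Lemma~\ref{l:5} and \eqref{e:rescoco}. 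Hence $L^*\pushfwd B\preccurlyeq\mathcal{L}_{-\gamma}(L,B)\preccurlyeq L\proxcc{\gamma}B$, and concatenating with the right half gives \ref{p:110i}. For \ref{p:110ii} and \ref{p:110iii} I would use squeezing: from $(\star)$, $0\preccurlyeq(L^*\circ B\circ L)-\mathcal{L}_\gamma(L,B)\preccurlyeq(L^*\circ B\circ L)-(L\proxcc{1/\gamma}B)$, so Lemma~\ref{l:1}\ref{l:1iv} bounds $\|(L^*\circ B\circ L)-\mathcal{L}_\gamma(L,B)\|$ by $\|(L^*\circ B\circ L)-(L\proxcc{1/\gamma}B)\|\to0$ as $\gamma\to\pinf$ (Corollary~\ref{c:54}\ref{c:54ii}, since $0<1/\gamma\to0$); and from \ref{p:110i}, $0\preccurlyeq\mathcal{L}_{-\gamma}(L,B)-(L^*\pushfwd B)\preccurlyeq(L\proxcc{\gamma}B)-(L^*\pushfwd B)$, whose norm tends to $0$ as $\gamma\to\pinf$ by Lemma~\ref{l:1}\ref{l:1iv} and Corollary~\ref{c:54}\ref{c:54iii}, i.e.\ as the parameter $-\gamma\to\minf$.

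\emph{Main obstacle.} The only ingredient not already in the excerpt is the geometric‑mean input used for $(\star)$: the operator monotonicity of $(X,Y)\mapsto X\#Y$ in each variable, together with $X\#X=X$. This is classical Kubo--Ando material, and I expect it to be the one place where an external fact must be cited (or a short lemma inserted). Everything else is bookkeeping: tracking that the relevant operators stay in $\mathscr{S}$ (so that $\#$, the inverses in \eqref{e:p110}, and $\mathcal{L}_{-\gamma}$ are meaningful) via $L$ isometry $\Rightarrow$ bounded below and Lemma~\ref{l:3}\ref{l:3iii}, and carefully matching the parameters $\gamma$, $1/\gamma$, $-\gamma$ when passing between \eqref{e:rescomp}, \eqref{e:rescoco}, \eqref{e:parall}, Lemma~\ref{l:5}, and Corollary~\ref{c:54}.
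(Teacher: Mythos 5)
Your proposal is correct and follows essentially the same route as the paper: the same sandwich $L\proxcc{\gamma}B\preccurlyeq\mathcal{L}_{1/\gamma}(L,B)\preccurlyeq L^*\circ B\circ L$ obtained from Corollary~\ref{c:54}\ref{c:54i}, $X\#X=X$, and monotonicity of the geometric mean (your rewriting $L^*\pushfwd(B+\gamma\Id_{\GG})=(L\proxcc{1/\gamma}B)+\gamma\Id_{\HH}$ via \eqref{e:rescomp} and Lemma~\ref{l:5} is only a cosmetic repackaging of the paper's \eqref{e:p110c}), followed by the identical inversion step through Lemma~\ref{l:1}\ref{l:1ii}, \eqref{e:parall}, and \eqref{e:rescoco} for the left half, and the same squeeze against Corollary~\ref{c:54}\ref{c:54ii}--\ref{c:54iii} for the limits, which you merely spell out via Lemma~\ref{l:1}\ref{l:1iv}. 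The geometric-mean monotonicity you flag as an external ingredient is likewise used (implicitly) in the paper, so there is no gap.
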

\begin{proof}
\ref{p:110i}:
Since $L$ is an isometry, $L^*\circ L=\Id_{\HH}$ and
Lemma~\ref{l:5} yields $L\proxc{\gamma}B=L\proxcc{\gamma}B$.
By Corollary~\ref{c:54}\ref{c:54i}, \eqref{e:p110}, and the fact
that $B\#B=B$, 
\begin{align}
\label{e:p110b}
\mathcal{L}_{1/\gamma}(L,B)
&\preccurlyeq\brk1{L^*\circ(B+\gamma^{-1}\Id_{\GG})\circ L}\#
\brk1{L^*\circ(B+\gamma^{-1}\Id_{\GG})\circ L}-\gamma^{-1}\Id_{\HH}
\nonumber\\&=\brk1{L^*\circ(B+\gamma^{-1}\Id_{\GG})\circ L}
-\gamma^{-1}\Id_{\HH}\nonumber\\
&=L^*\circ B\circ L+\gamma^{-1}(L^*\circ L-\Id_{\HH})\nonumber\\
&=L^*\circ B\circ L.
\end{align}
Similarly, \eqref{e:rescomp}, Corollary~\ref{c:54}\ref{c:54i}, and
\eqref{e:p110},
imply that
\begin{align}
\label{e:p110c}
L\proxc{\gamma}B&=L^*\pushfwd(B+\gamma^{-1}\Id_{\GG})
-\gamma^{-1}\Id_{\HH}\nonumber\\
&=\brk1{L^*\pushfwd(B+\gamma^{-1}\Id_{\GG})}\#
\brk1{L^*\pushfwd(B+\gamma^{-1}\Id_{\GG})}-\gamma^{-1}\Id_{\HH}
\nonumber\\
&\preccurlyeq\brk1{L^*\circ(B+\gamma^{-1}\Id_{\GG})\circ L}\#
\brk1{L^*\pushfwd(B+\gamma^{-1}\Id_{\GG})}-\gamma^{-1}\Id_{\HH}
\nonumber\\
&=\mathcal{L}_{1/\gamma}(L,B).
\end{align}
Thus, \eqref{e:p110b} and \eqref{e:p110c} yield 
\begin{equation}
\label{e:p110d}
L\proxcc{\gamma}B\preccurlyeq\mathcal{L}_{1/\gamma}(L,B)
\preccurlyeq L^*\circ B\circ L.
\end{equation} 
On the other hand, by virtue of Lemma~\ref{l:1}\ref{l:1ii},
\eqref{e:p110d} applied to $B^{-1}$ and $1/\gamma$, \eqref{e:p110},
and \eqref{e:rescoco},
\begin{align}
\label{e:p110e}
L^*\pushfwd B=(L^*\circ B^{-1}\circ L)^{-1}
\preccurlyeq\mathcal{L}_{\gamma}(L,B^{-1})^{-1}
=\mathcal{L}_{-\gamma}(L,B)
\preccurlyeq\brk1{L\proxcc{1/\gamma}B^{-1}}^{-1}
=L\proxc{\gamma}B=L\proxcc{\gamma}B.
\end{align}
Hence, the result follows from \eqref{e:p110d} and
\eqref{e:p110e}.

\ref{p:110ii}: This follows from \ref{p:110i} and
Corollary~\ref{c:54}\ref{c:54ii}.

\ref{p:110iii}: This follows from \ref{p:110i} and
Corollary~\ref{c:54}\ref{c:54iii}.
\end{proof}

\begin{remark}
\label{r:ah}
Note that the operator $\mathcal{L}_{\gamma}(L,B)$ is a type
of weighted geometric mean that interpolates between the parallel
composition $L^*\pushfwd B$ ($\gamma\to\minf$) and  
$L^*\circ B\circ L$ ($\gamma\to\pinf$). In the particular case
where $L$ and $B$ are defined as in Example~\ref{ex:rav}, 
$L^*\circ B\circ L=\sum_{k=1}^p\alpha_kB_k$ is the arithmetic
average, $L^*\pushfwd B =\brk1{\sum_{k=1}^p\alpha_kB_k^{-1}}^{-1}$
is the harmonic average, and $\mathcal{L}_{\gamma}(L,B)$ reduces
to the \emph{weighted $\mathscr{A}\#\mathscr{H}$-mean}
with parameter $\gamma$ of Example~\ref{ex:ah}, with
Proposition~\ref{p:110}\ref{p:110ii}--\ref{p:110iii} recovering
\cite[Proposition~3.4]{Kim11}.
\end{remark}

We now focus on nonlinear equations that are based on
resolvent compositions. The nonexpansive nature of these
operations, as shown in Section~\ref{sec:4}, will play a key role
in our subsequent analysis.

\begin{proposition}
\label{p:56}
Suppose that $L\in\BL(\HH,\GG)$ is bounded below and satisfies
$\|L\|\leq 1$, let $B\in\EuScript{S}(\GG)$, let $\gamma\in\RPP$,
and let $t\in\zeroun$. Set
\begin{equation}
\label{e:p56}
\varphi\colon\brk1{\EuScript{S}(\GG),d_{T}^{\GG}}\to
\brk1{\EuScript{S}(\HH),d_{T}^{\HH}}
\colon X\mapsto L\proxcc{\gamma}(X\#_tB).
\end{equation}
Then the following hold:
\begin{enumerate}
\item
\label{p:56i}
$\varphi$ is $(1-t)$-Lipschitzian. 
\item
\label{p:56ii}
Suppose that $\HH=\GG$. Then the problem
\begin{equation}
\label{e:p56b}
\text{find}\quad X\in\EuScript{S}(\HH)\quad\text{such that}
\quad X=L\proxcc{\gamma}(X\#_tB)
\end{equation}
admits a unique solution.
\end{enumerate}
\end{proposition}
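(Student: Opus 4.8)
The plan is to realize $\varphi$ as the composition of the weighted geometric mean map $X\mapsto X\#_tB$ with the resolvent cocomposition $X\mapsto L\proxcc{\gamma}X$, and to control each factor separately in the Thompson metric. First note that $\varphi$ is well defined: for $X\in\mathscr{S}(\GG)$ the geometric mean $X\#_tB$ lies in $\mathscr{S}(\GG)$, and since $L$ is bounded below, Proposition~\ref{p:29}\ref{p:29iva} gives $L\proxcc{\gamma}(X\#_tB)\in\mathscr{S}(\HH)$. The key auxiliary fact is that, for a fixed $B\in\mathscr{S}(\GG)$ and $t\in\zeroun$, the map $\mathscr{S}(\GG)\to\mathscr{S}(\GG)\colon X\mapsto X\#_tB$ is $(1-t)$-Lipschitzian for $d_{\GG}$. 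I would deduce this from two classical properties of the weighted geometric mean: the positive homogeneity $\brk1{\lambda X}\#_tB=\lambda^{1-t}\brk1{X\#_tB}$ for $\lambda\in\RPP$, and the monotonicity of $\#_t$ in its first argument. Indeed, with $g(X,Y)=\inf\menge{\lambda\in\RPP}{X\preccurlyeq\lambda Y}$ one has $X\preccurlyeq g(X,Y)Y$, hence $X\#_tB\preccurlyeq\brk1{g(X,Y)Y}\#_tB=g(X,Y)^{1-t}\brk1{Y\#_tB}$, so that $g\brk1{X\#_tB,Y\#_tB}\leq g(X,Y)^{1-t}$; the symmetric estimate, the identity $\max\{a^{1-t},b^{1-t}\}=(\max\{a,b\})^{1-t}$, and the definition \eqref{e:thomp} of $d_{\GG}$ then yield $d_{\GG}\brk1{X\#_tB,Y\#_tB}\leq(1-t)\,d_{\GG}(X,Y)$. (This contractivity is standard in the Thompson geometry of positive operators and can be quoted from the literature rather than reproved.)

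\ref{p:56i}: Combining the preceding estimate with the nonexpansiveness of $X\mapsto L\proxcc{\gamma}X$ from Theorem~\ref{t:thomp}\ref{t:thompi}, for all $X$ and $Y$ in $\mathscr{S}(\GG)$,
\begin{align}
d_{\HH}\brk1{\varphi(X),\varphi(Y)}
&=d_{\HH}\brk1{L\proxcc{\gamma}(X\#_tB),L\proxcc{\gamma}(Y\#_tB)}\nonumber\\
&\leq d_{\GG}\brk1{X\#_tB,Y\#_tB}
\leq(1-t)\,d_{\GG}(X,Y),
\end{align}
which is the assertion.

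\ref{p:56ii}: When $\HH=\GG$, part \ref{p:56i} shows that $\varphi\colon\mathscr{S}(\HH)\to\mathscr{S}(\HH)$ is Lipschitzian with constant $1-t\in\zeroun$, hence a strict contraction. Since $d_{\HH}$ is a complete metric on $\mathscr{S}(\HH)$ (recalled in Section~\ref{sec:1}), the Banach contraction principle applies: $\varphi$ has a unique fixed point, which is exactly the unique solution of \eqref{e:p56b}. The only genuinely nontrivial input here is the $(1-t)$-contractivity of $\#_t$ for the Thompson metric; everything else merely assembles results already established in the paper. Accordingly, the point requiring the most care is to supply---or cite precisely, in the general operator rather than matrix setting---the homogeneity and monotonicity of the weighted geometric mean on which that contractivity rests.
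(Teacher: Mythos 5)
Your proposal is correct and follows essentially the same route as the paper: write $\varphi$ as the geometric-mean map followed by $X\mapsto L\proxcc{\gamma}X$, invoke Theorem~\ref{t:thomp}\ref{t:thompi}, obtain the $(1-t)$-Lipschitz estimate for $X\mapsto X\#_tB$ in the Thompson metric, and conclude with the Banach--Picard theorem on the complete space $(\mathscr{S}(\HH),d_{\HH})$. The only difference is cosmetic: the paper obtains $d_{\GG}(X\#_tB,Y\#_tB)\leq(1-t)d_{\GG}(X,Y)+t\,d_{\GG}(B,B)$ directly from \cite[Theorem~2]{Cora94}, whereas you sketch a self-contained derivation of this contractivity from the homogeneity $(\lambda X)\#_tB=\lambda^{1-t}(X\#_tB)$ and the monotonicity of $\#_t$ in its first argument, which is a valid alternative justification of the same key inequality.
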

\begin{proof}
\ref{p:56i}: It follows from Theorem~\ref{t:thomp}\ref{t:thompi}
and \cite[Lemma~2.2(iii)]{Laws14} that
\begin{align}
\brk1{\forall X\in\EuScript{S}(\GG)}
\brk1{\forall Y\in\EuScript{S}(\GG)}\quad
d_{T}^{\HH}\brk1{\varphi(X),\varphi(Y)}&=
d_{T}^{\HH}\brk1{L\proxcc{\gamma}(X\#_tB),L\proxcc{\gamma}(Y\#_tB)}
\nonumber\\
&\leq d_{T}^{\GG}(X\#_tB,Y\#_tB)\nonumber\\
&\leq(1-t)d_{T}^{\GG}(X,Y)+td_{T}^{\GG}(B,B)\nonumber\\
&=(1-t)d_{T}^{\GG}(X,Y).
\end{align}

\ref{p:56ii}: Since $d_{T}^{\HH}$ is a complete metric on
$\EuScript{S}(\HH)$, \ref{p:56i} and the Banach--Picard theorem
\cite[Theorem~1.50]{Livre1} ensure that $\varphi$ admits a
unique fixed point, i.e., \eqref{e:p56b} admits a unique solution.
\end{proof}

\begin{remark}
\label{r:56}
Let $X\in\EuScript{S}(\HH)$ be the unique solution
to \eqref{e:p56b}. Since $(X\#_tB)^{-1}=X^{-1}\#_tB^{-1}$ and
$L\proxcc{\gamma}B=(L\proxc{1/\gamma}B^{-1})^{-1}$, we note
that $X^{-1}$ is the unique solution to the
problem
\begin{equation}
\text{find}\quad Y\in\EuScript{S}(\HH)\quad\text{such that}
\quad Y=L\proxc{1/\gamma}(Y\#_tB^{-1}).
\end{equation}
\end{remark}

\begin{proposition}
\label{p:57}
Suppose that $L\in\BL(\HH,\GG)$ is bounded below and satisfies
$\|L\|\leq 1$, let $B\in\BL(\GG)$, let $\gamma\in\RPP$, and
let $t\in\intv[o]{-1}{1}$. Suppose that there exists a sequence
$(B_n)_{n\in\NN}$ of invertible operators in $\BL(\GG)$ such that
$B_n\to B$, and set 
\begin{equation}
\label{e:p57}
\varphi\colon\brk1{\EuScript{S}(\GG),d_{T}^{\GG}}\to
\brk1{\EuScript{S}(\HH),d_{T}^{\HH}}\colon X\mapsto 
L\proxcc{\gamma}(B^*\circ X^t\circ B).
\end{equation}
Then the following hold:
\begin{enumerate}
\item
\label{p:57i}
$\varphi$ is $|t|$-Lipschitzian.
\item
\label{p:57ii}
Suppose that $\HH=\GG$. Then the problem
\begin{equation}
\text{find}\quad X\in\EuScript{S}(\HH)\quad\text{such that}
\quad X=L\proxcc{\gamma}(B^*\circ X^t\circ B)
\end{equation}
admits a unique solution.
\end{enumerate}
\end{proposition}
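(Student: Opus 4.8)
The plan is to follow the template of Proposition~\ref{p:56}: write $\varphi=\bigl(L\proxcc{\gamma}(\cdot)\bigr)\circ\Theta_B$, where $\Theta_B\colon X\mapsto B^*\circ X^{t}\circ B$, and recall that $L\proxcc{\gamma}(\cdot)$ is nonexpansive for the Thompson metric by Theorem~\ref{t:thomp}\ref{t:thompi}. Everything then reduces to showing that $\Theta_B$ is $|t|$-Lipschitzian for $d_{\GG}$. Granting this, \ref{p:57i} follows at once by composing the two bounds, and \ref{p:57ii} is obtained by noting that, when $\HH=\GG$, part~\ref{p:57i} makes $\varphi$ a $|t|$-Lipschitzian self-map of $(\mathscr{S}(\HH),d_{\HH})$ with $|t|<1$ (recall $t\in\intv[o]{-1}{1}$), hence a Banach contraction on a complete metric space (\cite[Lemma~3]{Thomp63}); the Banach--Picard theorem \cite[Theorem~1.50]{Livre1} then yields a unique fixed point, i.e., a unique solution of the equation.

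To analyze $\Theta_B$, I would first treat an invertible $B$ by factoring $\Theta_B=\Xi_B\circ P_{t}$, with $P_{t}\colon X\mapsto X^{t}$ and $\Xi_B\colon Y\mapsto B^*\circ Y\circ B$. For invertible $B$, the congruence $\Xi_B$ is an isometry of $(\mathscr{S}(\GG),d_{\GG})$: indeed $Y\preccurlyeq\lambda Z\Leftrightarrow B^*\circ Y\circ B\preccurlyeq\lambda\,(B^*\circ Z\circ B)$, the forward implication being Lemma~\ref{l:1}\ref{l:1iii} and the converse following from a second application of that lemma, now with $B^{-1}$; hence $g$ is unchanged in each of its arguments. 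The power map $P_{t}$ is $|t|$-Lipschitzian: for $0\leq t\leq 1$ this is the L\"{o}wner--Heinz inequality, since $Y\preccurlyeq\lambda Z$ with $\lambda\in\RPP$ yields $Y^{t}\preccurlyeq(\lambda Z)^{t}=\lambda^{t}Z^{t}$, whence $g(Y^{t},Z^{t})\leq g(Y,Z)^{t}$ and, symmetrically, $g(Z^{t},Y^{t})\leq g(Z,Y)^{t}$, so that $d_{\GG}(Y^{t},Z^{t})\leq t\,d_{\GG}(Y,Z)$ (using that $\max\{g(Y,Z),g(Z,Y)\}\geq 1$); for $-1\leq t\leq 0$ one writes $X^{t}=(X^{-1})^{-t}$ and combines the previous case with $d_{\GG}(Y^{-1},Z^{-1})=d_{\GG}(Y,Z)$, which comes from Lemma~\ref{l:1}\ref{l:1ii}. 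Composing, $\Theta_B$ is $|t|$-Lipschitzian whenever $B$ is invertible, and so is $\varphi$.

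For a general $B\in\BL(\GG)$, I would pass to the limit along the approximating sequence $(B_n)_{n\in\NN}$. Since $X^{t}$ is a fixed bounded operator, $\Theta_{B_n}(X)=B_n^*\circ X^{t}\circ B_n\to B^*\circ X^{t}\circ B=\Theta_B(X)$ in operator norm for each fixed $X\in\mathscr{S}(\GG)$; invoking Lemma~\ref{l:4}, which expresses $L\proxcc{\gamma}A$ as $L^*\circ(A^{-1}+\gamma\Psi)^{-1}\circ L$ with $\Psi=\Id_{\GG}-L\circ L^*\in\mathscr{P}(\GG)$ (here $\|L\|\leq 1$ is used), together with continuity of operator inversion, the maps $\varphi_n=L\proxcc{\gamma}\circ\Theta_{B_n}$ should converge pointwise to $\varphi$ on $\mathscr{S}(\GG)$; since each $\varphi_n$ is $|t|$-Lipschitzian for $(d_{\GG},d_{\HH})$ by the previous paragraph and $(\mathscr{S}(\HH),d_{\HH})$ is a metric space, the pointwise limit $\varphi$ inherits the bound. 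I expect this limiting step to be the main obstacle: one has to verify that the resolvents $\bigl((\Theta_{B_n}(X))^{-1}+\gamma\Psi\bigr)^{-1}$ converge and that $L\proxcc{\gamma}(\Theta_B(X))$ still lies in $\mathscr{S}(\HH)$ (so that $\varphi$ is well defined into $\mathscr{S}(\HH)$), which is where the hypotheses that $L$ is bounded below, that $\gamma>0$ acts as a strict regularization, and that $B$ lies in the closure of the invertible operators come into play. Once this is established, \ref{p:57i} is complete and \ref{p:57ii} follows as in the first paragraph.
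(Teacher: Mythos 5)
Your strategy coincides with the paper's: both factor $\varphi$ through the Thompson-nonexpansive map of Theorem~\ref{t:thomp}\ref{t:thompi}, establish the $\lvert t\rvert$-Lipschitz bound for the inner map with $B$ replaced by the invertible $B_n$ (congruence invariance of $d_{\GG}$ plus contraction of the power map), treat general $B$ via the approximating sequence, and obtain \ref{p:57ii} from the Banach--Picard theorem on the complete space $(\mathscr{S}(\HH),d_{\HH})$. Two differences are worth noting. First, for the power map you invoke L\"{o}wner--Heinz directly, whereas the paper writes $X^{\lvert t\rvert}=\Id_{\GG}\#_{\lvert t\rvert}X$ and cites \cite[Theorem~2]{Cora94}; these are interchangeable. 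Second, and more substantively, your limiting step --- pointwise convergence of $\varphi_n$ to $\varphi$ via Lemma~\ref{l:4} and ``continuity of inversion,'' which you yourself flag as the main obstacle --- is heavier than necessary and delicate when $B$ is not invertible, since then $(B^*\circ X^t\circ B)^{-1}$ is only a set-valued inverse and norm-continuity of inversion does not apply as stated. The paper sidesteps this entirely: it inserts $L\proxcc{\gamma}(B_n^*\circ X^t\circ B_n)$ and $L\proxcc{\gamma}(B_n^*\circ Y^t\circ B_n)$ into a triangle inequality and applies Theorem~\ref{t:thomp}\ref{t:thompi} once more to bound the two error terms by $d_{\GG}\brk1{B^*\circ X^t\circ B,B_n^*\circ X^t\circ B_n}$ and $d_{\GG}\brk1{B_n^*\circ Y^t\circ B_n,B^*\circ Y^t\circ B}$, which tend to $0$; letting $n\to\pinf$ yields $d_{\HH}\brk1{\varphi(X),\varphi(Y)}\leq\lvert t\rvert\,d_{\GG}(X,Y)$ with no resolvent-convergence analysis at all. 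If you adopt this device your argument closes; the residual well-definedness issue you raise (whether $B^*\circ X^t\circ B$ is strictly positive when $B$ is merely a norm limit of invertibles) is not addressed in the paper's proof either.
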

\begin{proof}
\ref{p:57i}: 
Let $X\in\EuScript{S}(\GG)$ and
$Y\in\EuScript{S}(\GG)$. By \cite[Lemma~2.2(i)]{Laws14},
\begin{equation}
\label{e:p57a}
(\forall n\in\NN)\quad
d_{T}^{\GG}(B_n^*\circ X^t\circ B_n,B_n^*\circ Y^t\circ B_n)
=d_{T}^{\GG}(X^t,Y^t)
=d_{T}^{\GG}(X^{\lvert t\rvert},Y^{\lvert t\rvert})
\end{equation}
Thus, combining
Theorem~\ref{t:thomp}\ref{t:thompi}, \eqref{e:p57a}, and 
\cite[Lemma~2.2(iii)]{Laws14},
\begin{align}
\label{e:p57b}
\brk1{\forall n\in\NN}\quad
d_{T}^{\HH}\brk1{L\proxcc{\gamma}(B^*_n\circ X^t\circ B_n),
L\proxcc{\gamma}(B^*_n\circ Y^t\circ B_n)}
&\leq d_{T}^{\GG}(B^*_n\circ X^t\circ B_n,B^*_n\circ Y^t\circ B_n)
\nonumber\\
&=d_{T}^{\GG}(X^{\lvert t\rvert},Y^{\lvert t\rvert})\nonumber\\
&=d_{T}^{\GG}(\Id_{\GG}\#_{\lvert t\rvert}X,
\Id_{\GG}\#_{\lvert t\rvert}Y)\nonumber\\
&\leq\lvert t\rvert d_{T}^{\GG}(X,Y).
\end{align}
Altogether, by Theorem~\ref{t:thomp}\ref{t:thompi} and
\eqref{e:p57b}, we deduce that $(\forall
X\in\EuScript{S}(\GG))$
$(\forall Y\in\EuScript{S}(\GG))$,
\begin{align}
d_{T}^{\HH}\brk1{\varphi(X),\varphi(Y)}
&\leq d_{T}^{\HH}\brk1{\varphi(X),
L\proxcc{\gamma}(B^*_n\circ X^t\circ B_n)}
+d_{T}^{\HH}\brk1{L\proxcc{\gamma}(B^*_n\circ X^t\circ B_n),
L\proxcc{\gamma}(B^*_n\circ Y^t\circ B_n)}
\nonumber\\
&\quad+d_{T}^{\HH}\brk1{L\proxcc{\gamma}(B^*_n\circ Y^t\circ B_n),
\varphi(Y)}\nonumber\\
&\leq d_{T}^{\HH}\brk1{\varphi(X),L\proxcc{\gamma}(
B^*_n\circ X^t\circ B_n)}+\lvert t\rvert d_{T}^{\GG}(X,Y)
+d_{T}^{\HH}\brk1{L\proxcc{\gamma}(B^*_n\circ Y^t\circ B_n),
\varphi(Y)}\nonumber\\
&\leq d_{T}^{\GG}\brk1{B^*\circ X^t\circ B,B_n^*\circ X^t\circ B_n}
+\lvert t\rvert d_{T}^{\GG}(X,Y)
+d_{T}^{\GG}\brk1{B_n^*\circ Y^t\circ B_n,B^*\circ Y^t\circ B}
\nonumber\\
&\to\lvert t\rvert d_{T}^{\GG}(X,Y).
\end{align}

\ref{p:57ii}: This follows from \ref{p:57i} and the 
Banach--Picard theorem. 
\end{proof}

\begin{corollary}
\label{c:60}
Consider the setting of Example~\ref{ex:comix}. Suppose that
$L_j$ is bounded below for some $j\in\{1,\ldots,p\}$ and that, for
every $k\in\{1,\ldots,p\}$, $\GG_k=\HH$, and let $s\in\zeroun$ and
$t\in\intv[o]{-1}{1}$. Then the problems
\begin{equation}
\label{e:comix1}
\text{find}\quad X\in\EuScript{S}(\HH)\quad\text{such that}
\quad X=\Rcm{\gamma}\brk1{L_k,X\#_sB_k}_{1\leq k\leq p}
\end{equation}
and
\begin{equation}
\label{e:comix2}
\text{find}\quad X\in\EuScript{S}(\HH)\quad
\text{such that}\quad 
X=\Rcm{\gamma}\brk1{L_k,B_k^*\circ X^t\circ B_k}_{1\leq k\leq p}
\end{equation}
admit unique solutions. 
\end{corollary}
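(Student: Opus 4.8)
The plan is to reduce both problems to the Banach--Picard theorem by invoking Propositions~\ref{p:56} and \ref{p:57} for the block operator $L\colon x\mapsto(\sqrt{\alpha_k}L_kx)_{1\le k\le p}$ of Example~\ref{ex:comix}, after recasting the unknowns as block-diagonal operators. First I would check that $L$ satisfies the hypotheses of those propositions: exactly as in the proof of Corollary~\ref{c:35}, the assumption that $L_j$ is bounded below for some $j$ forces $L$ to be bounded below, and $\|Lx\|_\GG^2=\sum_{k=1}^p\alpha_k\|L_kx\|_{\GG_k}^2\le\sum_{k=1}^p\alpha_k\|x\|_\HH^2=\|x\|_\HH^2$, together with $L_j\neq 0$, gives $0<\|L\|\le 1$.

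The key device is the \emph{diagonal embedding}
\begin{equation}
\iota\colon\mathscr{S}(\HH)\to\mathscr{S}(\GG)\colon X\mapsto(X,\ldots,X),
\end{equation}
i.e., the block-diagonal operator on $\GG=\bigoplus_{k=1}^p\HH$ all of whose diagonal blocks equal $X$. I would show that $\iota$ is an \emph{isometry} from $(\mathscr{S}(\HH),d_\HH)$ into $(\mathscr{S}(\GG),d_\GG)$: reading the L\"owner order blockwise shows $\iota(X)\preccurlyeq\lambda\,\iota(Y)\Leftrightarrow X\preccurlyeq\lambda Y$, hence $g(\iota(X),\iota(Y))=g(X,Y)$ and $d_\GG(\iota(X),\iota(Y))=d_\HH(X,Y)$. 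Moreover, since functions of block-diagonal operators act blockwise, writing $\tilde B$ for the block-diagonal operator with blocks $B_1,\ldots,B_p$, one has $\iota(X)\#_s\tilde B=(X\#_sB_1,\ldots,X\#_sB_p)$ and $\iota(X)^t=(X^t,\ldots,X^t)$; combining these with Example~\ref{ex:comix} yields
\begin{equation}
\label{e:plan60}
\begin{aligned}
\Rcm{\gamma}(L_k,X\#_sB_k)_{1\le k\le p}&=L\proxcc{\gamma}\brk1{\iota(X)\#_s\tilde B},\\
\Rcm{\gamma}(L_k,B_k^*\circ X^t\circ B_k)_{1\le k\le p}&=L\proxcc{\gamma}\brk1{\tilde B^*\circ\iota(X)^t\circ\tilde B}.
\end{aligned}
\end{equation}

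For \eqref{e:comix1}, I would observe that $\tilde B\in\mathscr{S}(\GG)$ and that Proposition~\ref{p:56}\ref{p:56i} guarantees that $\varphi\colon Y\mapsto L\proxcc{\gamma}(Y\#_s\tilde B)$, from $(\mathscr{S}(\GG),d_\GG)$ to $(\mathscr{S}(\HH),d_\HH)$, is $(1-s)$-Lipschitzian. Composing with the isometry $\iota$ gives a $(1-s)$-Lipschitzian self-map $\varphi\circ\iota$ of $(\mathscr{S}(\HH),d_\HH)$, which is a strict contraction since $s\in\zeroun$. As $d_\HH$ is complete on $\mathscr{S}(\HH)$, the Banach--Picard theorem yields a unique fixed point of $\varphi\circ\iota$, and by the first identity in \eqref{e:plan60} this is exactly the unique solution of \eqref{e:comix1}.

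For \eqref{e:comix2}, the only hypothesis of Proposition~\ref{p:57} needing comment is the existence of a sequence of invertible operators converging to $\tilde B$; but each $B_k\in\mathscr{S}(\HH)$ is invertible, hence so is $\tilde B$, and the constant sequence works. Proposition~\ref{p:57}\ref{p:57i} then guarantees that $Y\mapsto L\proxcc{\gamma}(\tilde B^*\circ Y^t\circ\tilde B)$ is $|t|$-Lipschitzian, and precomposing with $\iota$, together with $|t|<1$ and completeness of $d_\HH$, gives via Banach--Picard and the second identity in \eqref{e:plan60} the unique solution of \eqref{e:comix2}. The main (and essentially only) work is in the second paragraph: verifying that $\iota$ is an isometry and that $\#_s$ and the $t$-th power respect the block-diagonal structure, which is what reconciles the fact that these fixed-point equations live on $\mathscr{S}(\HH)$ with Propositions~\ref{p:56}--\ref{p:57}, whose fixed-point statements are phrased for operators on $\GG$.
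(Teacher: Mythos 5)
Your proposal is correct and follows essentially the same route as the paper: the paper also introduces the diagonal embedding $X\mapsto\underline{X}$ (your $\iota$), notes it is nonexpansive for the Thompson metric (you prove the slightly stronger isometry property, which is fine), identifies the blockwise action of $\#_s$ and the $t$-th power so that the fixed-point maps coincide with $\varphi_1\circ T$ and $\varphi_2\circ T$ from Propositions~\ref{p:56}\ref{p:56i} and \ref{p:57}\ref{p:57i}, and concludes with the Banach--Picard theorem.
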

\begin{proof}
Set $R\colon\EuScript{S}(\HH)\to\EuScript{S}(\GG)
\colon X\mapsto \boldsymbol{\mathcal{X}}$, where
$\boldsymbol{\mathcal{X}}\colon\GG\to\GG\colon(y_k)\mapsto
(Xy_k)_{1\leq k\leq p}$, and set
\begin{equation}
\varphi_1\colon\brk1{\EuScript{S}(\GG),d_{T}^{\GG}}\to
\brk1{\EuScript{S}(\HH),d_{T}^{\HH}}
\colon X\mapsto L\proxcc{\gamma}(X\#_sB)
\end{equation}
and
\begin{equation}
\varphi_2\colon\brk1{\EuScript{S}(\GG),d_{T}^{\GG}}\to
\brk1{\EuScript{S}(\HH),d_{T}^{\HH}}\colon X\mapsto 
L\proxcc{\gamma}(B^*\circ X^t\circ B).
\end{equation}
Note that $(\forall\lambda\in\RPP)$
$X\preccurlyeq\lambda Y\Rightarrow
\boldsymbol{\mathcal{X}}\preccurlyeq
\lambda\boldsymbol{\mathcal{Y}}$. Thus, 
\begin{equation}
\label{e:c60a}
g(\boldsymbol{\mathcal{X}},\boldsymbol{\mathcal{Y}})
=\inf\menge{\lambda\in\RPP}{\boldsymbol{\mathcal{X}}\preccurlyeq
\lambda\boldsymbol{\mathcal{Y}}}
\leq\inf\menge{\lambda\in\RPP}{X\preccurlyeq\lambda Y}=g(X,Y),
\end{equation}
and it follows from \eqref{e:thomp} that
\begin{equation}
\label{e:c60b}
d_{T}^{\GG}\brk1{R(X),R(Y)}
=d_{T}^{\GG}(\boldsymbol{\mathcal{X}},\boldsymbol{\mathcal{Y}})
\leq d_{T}^{\HH}(X,Y).
\end{equation}
Now, given that $R$ is nonexpansive,
Propositions~\ref{p:56}\ref{p:56i} implies that $\varphi_1\circ R$
is $(1-s)$-Lipschitzian, whereas Proposition~\ref{p:57}\ref{p:57i}
implies that $\varphi_2\circ R$ is $\lvert t\rvert$-Lipschitzian.
Further, since $\boldsymbol{\mathcal{X}}\#_sB\colon\GG\to\GG\colon
(y_k)_{1\leq k\leq p}\mapsto\brk1{(X\#_sB_k)y_k}_{1\leq k\leq p}$ 
and $B^*\circ X^t\circ B\colon\GG\to\GG\colon(y_k)_{1\leq k\leq p}
\mapsto\brk1{(B_k^*\circ X^t\circ B_k)y_k}_{1\leq k\leq p}$, we
deduce that
\begin{equation}
\varphi_1\circ T\colon\EuScript{S}(\HH)\to
\EuScript{S}(\HH)\colon X\mapsto
L\proxcc{\gamma}(\boldsymbol{\mathcal{X}}\#_sB)
=\Rcm{\gamma}(L_k,X\#_sB_k)_{1\leq k\leq p}
\end{equation}
and
\begin{equation}
\varphi_2\circ T\colon\EuScript{S}(\HH)\to
\EuScript{S}(\HH)\colon X\mapsto
L\proxcc{\gamma}(B^*\circ X^t\circ B)
=\Rcm{\gamma}(L_k,B_k^*\circ X^t\circ B_k)_{1\leq k\leq p}.
\end{equation}
Altogether, it follows from the Banach--Picard theorem that
$\varphi_1\circ T$ and $\varphi_2\circ T$ admit unique fixed
points, i.e., the problems \eqref{e:comix1} and \eqref{e:comix2}
admit unique solutions.
\end{proof}

\begin{corollary}[\protect{\cite[Theorem~4.2]{Kum15}}]
Consider the setting of Example~\ref{ex:rav}, and let $s\in\zeroun$
and $t\in\intv[o]{-1}{1}$. Then the problems
\begin{equation}
\label{e:rav1}
\text{find}\quad X\in\EuScript{S}(\HH)\quad\text{such that}
\quad X=\rav_\gamma(X\#_sB_k)_{1\leq k\leq p}
\end{equation}
and 
\begin{equation}
\label{e:rav2}
\text{find}\quad X\in\EuScript{S}(\HH)\quad\text{such that}
\quad X=\rav_\gamma(B_k^*\circ X^t\circ B_k)_{1\leq k\leq p}
\end{equation}
admit unique solutions.
\end{corollary}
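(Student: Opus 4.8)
The plan is to obtain the statement as the specialization of Corollary~\ref{c:60} to the setting of Example~\ref{ex:rav}, where $\GG_k=\HH$ and $L_k=\Id_{\HH}$ for every $k\in\{1,\ldots,p\}$. First I would note that $\Id_{\HH}$ is an isometry, hence bounded below, so the hypothesis of Corollary~\ref{c:60} that $L_j$ be bounded below for some $j\in\{1,\ldots,p\}$ is automatically fulfilled, as is the requirement $\|L_k\|\leq 1$. Next I would invoke the identity $\rav_{\gamma}(A_k)_{1\leq k\leq p}=\Rcm{\gamma}(\Id_{\HH},A_k)_{1\leq k\leq p}$ recorded in Example~\ref{ex:rav}, which shows that, for this choice of $L$, the fixed-point equations \eqref{e:rav1} and \eqref{e:rav2} coincide with \eqref{e:comix1} and \eqref{e:comix2}, with the parameters $s\in\zeroun$ and $t\in\intv[o]{-1}{1}$ ranging over exactly the sets allowed there. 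Existence and uniqueness of the solutions then follow directly from Corollary~\ref{c:60}.

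I do not expect any real obstacle: the argument is a pure specialization, and the only two points worth stating explicitly are (i) that $\Id_{\HH}$ is bounded below, so the structural assumption of Corollary~\ref{c:60} holds, and (ii) the notational translation between $\rav_{\gamma}$ and $\Rcm{\gamma}$, which is exactly Example~\ref{ex:rav}. For completeness one could alternatively give a direct proof along the lines of Propositions~\ref{p:56} and~\ref{p:57}: combine the nonexpansiveness of the resolvent average (the $L_k=\Id_{\HH}$ instance of Theorem~\ref{t:thomp}\ref{t:thompi}) with the contraction estimate $d_{\HH}(X\#_sB_k,Y\#_sB_k)\leq(1-s)\,d_{\HH}(X,Y)$ from \cite[Theorem~2]{Cora94} for \eqref{e:rav1}, and with the estimate $d_{\HH}(B_k^*\circ X^t\circ B_k,B_k^*\circ Y^t\circ B_k)=d_{\HH}(X^{\lvert t\rvert},Y^{\lvert t\rvert})\leq\lvert t\rvert\,d_{\HH}(X,Y)$ together with an approximation of each $B_k$ by invertible operators for \eqref{e:rav2}, and then apply the Banach--Picard theorem on the complete metric space $(\mathscr{S}(\HH),d_{\HH})$. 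Either route yields the claim.
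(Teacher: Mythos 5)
Your proposal is correct and follows the same route as the paper, which proves this corollary simply as a direct consequence of Corollary~\ref{c:60} specialized to Example~\ref{ex:rav}; your explicit remarks that $\Id_{\HH}$ is bounded below and that $\rav_{\gamma}$ coincides with $\Rcm{\gamma}(\Id_{\HH},\cdot)$ make the specialization transparent.
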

\begin{proof}
A direct consequence of Corollary~\ref{c:60}.
\end{proof}

\begin{remark}
According to Corollary~\ref{c:35}\ref{c:35ii}, the limit problems
of \eqref{e:comix1} and \eqref{e:comix2} as $0<\gamma\to 0$ are
\begin{equation}
\label{e:ave1}
\text{find}\quad X\in\EuScript{S}(\HH)\quad\text{such that}
\quad X=\sum_{k=1}^p\alpha_kL_k^*\circ(X\#_sB_k)\circ L_k
\end{equation}
and
\begin{equation}
\label{e:ave2}
\text{find}\quad X\in\EuScript{S}(\HH)\quad\text{such that}
\quad X=\sum_{k=1}^p\alpha_kL_k^*\circ(B_k^*\circ X^t\circ B_k)
\circ L_k.
\end{equation}
These problems and the uniqueness of their solutions were studied
in \cite{Laws14,Lim09,Lim12} when, for every $k\in\{1,\ldots,p\}$,
$\GG_k=\HH$ and $L_k=\Id_{\HH}$.
\end{remark}

\section*{Acknowledgement}

This work is a part of the author's Ph.D. dissertation. The author
acknowledges the guidance of his Ph.D. advisor P. L. Combettes,
throughout this work. 
%The author also thanks the anonymous referee for the comments and
%suggestions, which improved the quality of this manuscript.


\begin{thebibliography}{99}
\setlength{\itemsep}{0pt}

\bibitem{Ando87}
T. Ando,
On some operator inequalities,
{\em Math. Ann.},
vol. 279, pp. 157--159, 1987.

\bibitem{Baus16}
S. Bartz, H. H. Bauschke, S. M. Moffat, and X. Wang, 
The resolvent average of monotone operators: Dominant and 
recessive properties,
{\em SIAM J. Optim.},
vol. 26, pp. 602--634, 2016.

\bibitem{Livre1} 
H. H. Bauschke and P. L. Combettes, 
{\em Convex Analysis and Monotone Operator Theory in Hilbert 
Spaces,} 2nd ed. 
Springer, New York, 2017.

\bibitem{Baus10}
H. H. Bauschke, S. M. Moffat, and X. Wang,
The resolvent average for positive semidefinite matrices,
{\em Linear Algebra Appl.},
vol. 432, pp. 1757--1771, 2010.

\bibitem{Jota24}
M. N. B\`ui and P. L. Combettes,
Integral resolvent and proximal mixtures,
{\em J. Optim. Theory Appl.},
vol. 203, pp. 2328--2353, 2024.

\bibitem{Svva23}
P. L. Combettes,
Resolvent and proximal compositions,
{\em Set-Valued Var. Anal.},
vol. 31, art. 22, 29 pp., 2023. 

\bibitem{Acnu24}
P. L. Combettes,
The geometry of monotone operator splitting methods,
{\em Acta Numer.},
vol. 33, pp. 487--632, 2024.

\bibitem{Eusi24}
P. L. Combettes and D. J. Cornejo,
Signal recovery with proximal comixtures,
{\em Proc. Europ. Signal Process. Conf.},
pp. 2637--2641, Lyon, France, August 26--30, 2024.

\bibitem{Eect25}
P. L. Combettes and D. J. Cornejo,
Variational analysis of proximal compositions and integral proximal
mixtures.
{\em Evol. Equ. Control Theory},
vol. 17, pp. 106--139, 2026.

\bibitem{Sipr21}
P. L. Combettes and J.-C. Pesquet, 
Fixed point strategies in data science, 
{\em IEEE Trans. Signal Process.}, 
vol. 69, pp. 3878--3905, 2021.

\bibitem{Cora94}
G. Corach, H. Porta, and L. Recht,
Convexity of the geodesic distance on spaces of positive operators,
{\em Illinois J. Math.},
vol. 38, pp. 87--94, 1994.

\bibitem{Svva25}
D. J. Cornejo,
Parametrized families of resolvent compositions,
{\em Set-Valued Var. Anal.},
vol. 33, art. 6, 24 pp., 2025.

\bibitem{Gaub14}
S. Gaubert and Z. Qu,
The contraction rate in Thompson's part metric of order-preserving
flows on a cone -- Application to generalized Riccati equations,
{\em J. Differential Equations},
vol. 256, pp. 2902--2948, 2014.

\bibitem{Hasa20}
M. Hasannasab, J. Hertrich, S. Neumayer, G. Plonka, S. Setzer, and
G. Steidl,
Parseval proximal neural networks,
{\em J. Fourier Anal. Appl.},
vol. 26, art. 59, 31 pp., 2020.

\bibitem{Kami17}
U. S. Kamilov,
A parallel proximal algorithm for anisotropic total variation
minimization,
{\em IEEE Trans. Image Process.},
vol. 26, pp. 539--548, 2017.  

\bibitem{Kim22}
S. Kim,
Mixture and interpolation of the parametrized ordered means,
{\em J. Inequal. Appl.},
vol. 2022, art. 119, 16 pp., 2022.

\bibitem{Kim11}
S. Kim, J. Lawson, and Y. Lim,
The matrix geometric mean of parametrized, weighted arithmetic and
harmonic means,
{\em Linear Algebra Appl.},
vol. 435, pp. 2114--2131, 2011.

\bibitem{Kubo80}
F. Kubo and T. Ando,
Means of positive linear operators,
{\em Math. Ann.},
vol. 246, pp. 205--224, 1980.

\bibitem{Kum15}
S. Kum and Y. Lim,
Nonexpansiveness of the resolvent average,
{\em J. Math. Anal. Appl.},
vol. 432, pp. 918--927, 2015.

\bibitem{Laws01}
J. D. Lawson and Y. Lim, 
The geometric mean, matrices, metrics, and more, 
{\em Amer. Math. Monthly},
vol. 108, pp. 797--812, 2001.

\bibitem{Laws07}
J. Lawson and Y. Lim,
A Birkhoff contraction formula with applications to Riccati
equations,
{\em SIAM J. Control Optim.},
vol. 46, pp. 930--951, 2007.

\bibitem{Laws14}
J. Lawson and Y. Lim,
Karcher means and Karcher equations of positive definite operators,
{\em Trans. Amer. Math. Soc. Ser. B},
vol. 1, pp. 1--22, 2014.

\bibitem{Lee08}
H. Lee and Y. Lim,
Invariant metrics, contractions and nonlinear matrix equations,
{\em Nonlinearity},
vol. 21, pp. 857--878, 2008.

\bibitem{Lemm15}
B. Lemmens and M. Roelands,
Unique geodesics for Thompson's metric, 
{\em Ann. Inst. Fourier (Grenoble)},
vol. 65, pp. 315--348, 2015.

\bibitem{Lim09}
Y. Lim,
Solving the nonlinear matrix equation
$X=Q+\sum_{i=1}^mM_iX^{\delta_i}M_i^*$ via a contraction
principle,
{\em Linear Algebra Appl.},
vol. 430, pp. 1380--1383, 2009.

\bibitem{Lim12}
Y. Lim and M. P\'{a}lfia,
Matrix power means and the Karcher mean,
{\em J. Funct. Anal.},
vol. 262, pp. 1498--1514, 2012.

\bibitem{Mont98}
L. Montrucchio,
Thompson metric, contraction property and differentiability of
policy functions,
{\em J. Econ. Behav. Organ.},
vol. 33, pp. 449--466, 1998.

\bibitem{RedSim}
M. Reed and B. Simon,
{\em Methods of Modern Mathematical Physics, Vol. I: Functional
Analysis},
Academic Press, New York, 1980.

\bibitem{Shen17}
L. Shen, W. Liu, J. Huang, Y.-G. Jiang, and S. Ma,
Adaptive proximal average approximation for composite convex
minimization,
{\em Proc. AAAI Conf. Artif. Intell.},
vol. 31, pp. 2513--2519, 2017.

\bibitem{Thomp63}
A. C. Thompson,
On certain contraction mappings in a partially ordered vector 
space,
{\em Proc. Amer. Math. Soc.},
vol. 14, pp. 438--443, 1963.

\bibitem{Wang11}
X. Wang,
Self-dual regularization of monotone operators via the resolvent
average,
{\em SIAM J. Optim.},
vol. 21, pp. 438--462, 2011.

\bibitem{Yuyl13}
Y.-L. Yu,
Better approximation and faster algorithm using the proximal
average, 
{\em Proc. Conf. Adv. Neural Inform. Process. Syst.}, 
pp. 458--466, 2013.

\end{thebibliography}
\end{document}